\begin{document}

\newtheorem{thm}{Theorem} [section]
\newtheorem{cor}[thm]{Corollary}
\newtheorem{lem}[thm]{Lemma}
\newtheorem{prop}[thm]{Proposition}
\newtheorem{conj}[thm]{Conjecture}
\newtheorem{claim}{Claim}[thm]

\theoremstyle{definition}
\newtheorem{definition}[thm]{Definition}
\newtheorem{example}[thm]{Example}

\theoremstyle{remark}
\newtheorem{rem}[thm]{Remark}

\numberwithin{equation}{section}

\newcommand{\thmref}[1]{Theorem~\ref{#1}}
\newcommand{\secref}[1]{Section~\ref{#1}}
\newcommand{\lemref}[1]{Lemma~\ref{#1}}
\newcommand{\propref}[1]{Proposition~\ref{#1}}
\newcommand{\corref}[1]{Corollary~\ref{#1}}
\newcommand{\remref}[1]{Remark~\ref{#1}}
\newcommand{\exref}[1]{Example~\ref{#1}}
\newcommand{\qbinom}[2]{\begin{bmatrix} #1\\#2 \end{bmatrix} }

\newcommand{\nc}{\newcommand}
 \nc{\Z}{{\mathbb Z}}
 \nc{\C}{{\mathbb C}}
 \nc{\N}{{\mathbb N}}
 \nc{\F}{{\mf F}}
 \nc{\KP}{\text{KP}}
 \nc{\Q}{\mathbb{Q}}
 \nc{\lad}{\lambda}
 \nc{\ep}{\epsilon}
 \nc{\h}{\mathfrak h}
 \nc{\n}{\mf n}
\nc{\La}{\Lambda}
 \nc{\is}{{\mathbf i}}
 \nc{\V}{\mf V}
 \nc{\bi}{\bibitem}
 \nc{\E}{\mc E}
 \nc{\ba}{\tilde{\pa}}
 \nc{\half}{\frac{1}{2}}
 \nc{\hgt}{\text{ht}\,}
 \nc{\mc}{\mathcal}
 \nc{\mf}{\mathfrak} 
 \nc{\hf}{\frac{1}{2}}
 \nc{\hz}{\hf+\Z}

\nc{\U}{\bold{U}}
\nc{\Ui}{\bold{U}^{\imath}}
\nc{\Udot}{\dot{\bold{U}}}
\nc{\Uidot}{\dot{\bold{U}^{\imath}}}
\newcommand{\tK}{\tilde{K}}
\newcommand{\EE}[1]{E^{{(#1)}}}
\newcommand{\FF}[1]{F^{{(#1)}}}
\newcommand{\dvev}[1]{{\mathfrak{t}}_{\ev}^{{(#1)}}}
\newcommand{\dvodd}[1]{{\mathfrak{t}}_{\odd}^{{(#1)}}}
\newcommand{\dv}[1]{{\mathfrak{t}}_{\odd}^{{(#1)}}}
\newcommand{\ggq}[1]{(1-q^{#1})}

\nc{\ov}{\overline}
\nc{\ul}{\underline}
\nc{\wt}{\widetilde}
\nc{\I}{\mathbb{I}}

\nc{\be}{e}
 \nc{\bff}{\bold{f}}
 \nc{\bk}{k}
 \nc{\bt}{t}
\nc{\id}{\text{id}}
\nc{\Ihf}{\I^\imath}
\nc{\one}{\bold{1}}
\nc{\fus}{\natural}
\nc{\fprime}{\bold{'f}}
\nc{\Qq}{\Q(q)}
\nc{\qq}{(q^{-1}-q)}
\nc{\uqsl}{\bold{U}_q({\mf{sl}_{2n+2}})}
\nc{\BLambda}{{\Lambda_{\inv}}}
\nc{\ThetaA}{\Theta}
\nc{\ThetaB}{\Theta^{\imath}}
\nc{\ThetaC}{\Theta^{\jmath}}
\nc{\B}{\bold{B}}
\nc{\Abar}{\psi}
\nc{\Bbar}{\psi_\imath}
\nc{\HBm}{\mc{H}_{B_m}}
\nc{\ua}{\mf{u}}
\nc{\nb}{u}
\nc{\wtA}{\texttt{wt}}

\nc{\gq}{(1-q^{2})}
\nc{\A}{\mathcal A}

\newcommand{\blue}[1]{{\color{blue}#1}}
\newcommand{\red}[1]{{\color{red}#1}}
\newcommand{\green}[1]{{\color{green}#1}}
\newcommand{\white}[1]{{\color{white}#1}}

\title[PBW bases for modified quantum groups]{PBW bases for modified quantum groups}
 
\author[Weiqiang Wang]{Weiqiang Wang}
\address{Department of Mathematics, University of Virginia, Charlottesville, VA 22904}
\email{ww9c@virginia.edu}

\subjclass[2010]{Primary 17B37}  
\keywords{Quantum groups, canonical basis, PBW basis}

\begin{abstract}
We construct a basis for a modified quantum group of finite type, extending the PBW bases of positive and negative halves of a quantum group. Generalizing Lusztig's classic results on PBW bases, we show that this basis is orthogonal with respect to its natural bilinear form (and hence called a PBW basis), and moreover, the matrix for the PBW-expansion of the canonical basis is unital triangular. All these follow by a new construction of the modified quantum group of arbitrary type, which is built on limits of sequences of elements in tensor products of lowest and highest weight modules. Explicitly formulas are worked out in the rank one case. 
\end{abstract}

\maketitle



\begin{quote}
\begin{center}
{\em Dedicated to  George Lusztig with admiration and appreciation 
}
\end{center}
\end{quote}

\vspace{1em}

\section{Introduction}

\subsection{}

Let $\U =\U^-\U^0\U^+$ be a Drinfeld-Jimbo quantum group (of finite type). There are several remarkable bases for $\U^-$, known as PBW basis and canonical basis. The PBW basis is orthogonal with respect to a natural bilinear form on $\U^-$, providing an approach to the construction of the canonical basis \cite{Lu90, Lu91} (also cf. \cite{K91} for another construction of the canonical basis). Lusztig \cite{Lu92, Lu93} further constructed a canonical basis for the modified quantum group $\Udot$, which is compatible with canonical bases on the tensor products of lowest and highest weight modules ${}^\omega L(\lad) \otimes L(\mu)$, for various dominant weights $\lad, \mu \in X^+$. The canonical bases admit remarkable positivity properties in ADE and symmetric types; they have major impacts in several (geometric, combinatorial, and categorical) directions of representation theory. 

\subsection{}

The goal of this paper is to formulate a PBW basis for the modified quantum group $\Udot$ of finite type with an orthogonality property and to establish its relations to canonical basis. The formulation follows by a new construction for the modified quantum group of arbitrary type, which is built on limits of sequences of elements in tensor products of lowest and highest weight modules.

\subsection{}

The inspiration came from the computation by the author \cite{Wa21} of an orthogonal basis for an $\imath$quantum group of rank 1 in terms of $\imath$divided powers (aka $\imath$canonical basis) \cite{BW18, BeW18}; in this setting, the $\imath$quantum group is a polynomial algebra in one variable. The $\imath$quantum groups arise from quantum symmetric pairs, and as we view quantum groups as $\imath$quantum groups of diagonal type, it is natural to explore the counterpart in the Drinfeld-Jimbo quantum group setting, starting again in rank 1. 

Indeed, by imposing the orthogonality condition, we are able to construct naturally and compute explicitly an (apparently new) PBW basis for $\Udot$ of rank 1 in terms of the canonical basis. Moreover, relations of these two bases when acting on the tensor product modules of the form ${}^\omega L(p) \otimes L(p+m)$ can be further observed; compare \cite[\S25.3]{Lu93}. A (PBW) basis of $\Udot$ with an orthogonality property emerges as a limit in a suitable sense of the tensor products of PBW basis elements acting on the tensor product modules. The rank 1 case is carried out in \S~\ref{sec:rank1}--\ref{sec:rank1c}, and some reader might prefer to go over the rank 1 case first. A bilinear pairing formula between canonical basis elements which appeared in Lauda \cite[Proposition~ 2.8]{La10} (who gave a long combinatorial proof) follows most naturally from the orthogonality of PBW basis for $\Udot$ and the PBW expansion of the canonical basis of $\Udot$. 

We develop in Section~\ref{sec:PBW} a framework for studying the limits of the so-called standard sequences of elements in tensor products of lowest and highest weight modules, ${}^\omega L(\lad) \otimes L(\lad +\zeta)$ with $\zeta$ fixed, as $\lad$ tends to $\infty$. This leads to a $\Q(q)$-linear isomorphism (valid for quantum groups of arbitrary type)
\[
\mathcal F_\zeta\colon \U^+\otimes \U^- \stackrel{\cong}{\longrightarrow} \Udot \one_\zeta,
\]
which allows us to transfer any pair of bases for $\U^\pm$ to a new basis for $\Udot$. 

A {\em fused canonical basis} for $\Udot$ is obtained via $\mathcal F_\zeta$ (for various $\zeta$) from the pure tensors of canonical bases in $\U^+\otimes \U^-$. 

\subsection{}

Now assume $\U$ is of finite type. 
The observations made in the rank 1 example suggest us to define the PBW basis for $\Udot$ as the transfer under $\mathcal F_\zeta$ (for various $\zeta$) of a tensor product of PBW bases in $\U^+ \otimes \U^-$; here the PBW bases of $\U^\pm$ can be associated to any reduced expression of the longest Weyl group element $w_0$. We show that the PBW basis for $\Udot$ is orthogonal with respect to the standard bilinear form on $\Udot$; it contains as a subset the PBW bases for $\U^+$ and $\U^-$. We further show that the transition matrix from the canonical basis to the PBW basis on $\Udot$ is unital triangular. 

Let us make clear that the PBW basis and the fused canonical basis for $\Udot$ are bases over $\Q(q)$, but they do not lie in the integral form of $\Udot$. Already in the rank 1 case, the coefficients of the PBW-expansion of the canonical basis are typically (up to factors of $q$-powers) rational functions of the following form 
\[
\prod_{a=1}^m \frac1{1-q^{-2a}}.
\]
These rational functions expand as power series in $q^{-1}$ with {\em positive integral} coefficients. 

For a general ADE type, we show that the canonical basis is PBW-positive with {\em positive} coefficients in $\N[[q^{-1}]] \cap \Q(q)$. The proof relies on two positivity results. To that end, we show that the canonical basis has an expansion with coefficients in $\N[[q^{-1}]] \cap \Q(q)$ in terms of the fused canonical basis by using a positivity result in finite type of Webster \cite[Corollary 8.9]{We15} on the expansion of any canonical basis element in terms of pure tensors of canonical basis elements in ${}^\omega L(\lad) \otimes L(\lad +\zeta)$. On the other hand, the canonical basis of $\U^+$ is PBW-positive; this was first established in \cite[Corollary~ 10.7]{Lu90} when the reduced expressions of $w_0$ are ``adapted",  conjectured by Lusztig and proved by Syu Kato \cite{Ka14} using categorification for arbitrary reduced expressions of $w_0$ (see \cite{BKM14} for a second categorification proof and \cite{Oy18} for another proof based on the positivity of canonical bases under comultiplication \cite{Lu91}). Consequently, it follows that the fused canonical basis of $\Udot$ is PBW-positive. 

It will be very interesting to explore if the fused canonical basis and the PBW basis of $\Udot$ admit a categorification in a generalized KLR categorical setting, generalizing the geometrical and categorical interpretation for the canonical basis and PBW basis of $\U^+$  \cite{KL10, R12, VV11, Ka14, BKM14}. 
We shall return in \cite{Wa21} to construct PBW bases for modified $\imath$quantum groups arising from quantum symmetric pairs. 

\vspace{2mm}
{\bf Acknowledgement.} The author is partially supported by the NSF grant DMS-2001351. We thank the referee for helpful comments.

\section{A limit construction of the modified quantum group}
\label{sec:PBW}

In this section, we develop a framework for studying limits of sequences of elements in tensor product $\U$-modules ${}^\omega L(\lad) \otimes L(\lad+\zeta)$, as $\lad$ tends to $\infty$. This leads to a linear isomorphism $\U^+ \otimes \U^- \rightarrow \Udot \one_\zeta$, which allows to construct new bases for $\Udot$, including the fused canonical basis of $\Udot$ arising from the pure tensors of canonical bases in $\U^+ \otimes \U^-$. 

\subsection{Quantum groups and bilinear forms}

We denote by $\U$ the quantum group \cite{Lu93} associated to the Cartan/root datum $(X, Y, \I,  \cdot)$ with a perfect bilinear pairing $\langle \cdot, \cdot \rangle : Y\times X \rightarrow \Z$; it is a $\Qq$-algebra generated by $E_i$, $F_i$, $K_\mu$, for $i\in \I, \mu \in Y$. Denote by $\Udot$ the modified quantum group \cite[Chapter~ 23]{Lu93}. 
Denote by $\FF{m}_i =F_i^m /[m]_{q_i}!$, for $i\in \I, m\in \N$, the divided powers of $F_i$. Denote by $X^+$ the set of dominant weights in $X$. The comultiplication $\Delta$ satisfies
\begin{align}
  \label{eq:Delta}
\Delta(F_i) = F_i \otimes \tK_i^{-1} + 1\otimes F_i,
\qquad
\Delta(E_i) = E_i \otimes 1 + \tK_i \otimes E_i. 
\end{align}
By identifying $\bff \cong \U^-$ ($z \mapsto z^-$), we have $\Qq$-linear maps ${}_i r, r_i : \U^- \rightarrow \U^-$, for $i\in \I$; cf. \cite[1.2.13]{Lu93}. 
We have \cite[3.1.6]{Lu93}, for $y \in \U^-$, 
\begin{align}
  \label{Eiy}
E_iy -y E_i = \frac{\tK_i \, {}_i r(y) - r_i(y) \tK_i^{-1}}{q_i -q_i^{-1}}. 
\end{align}
Similarly, by identifying $\bff \cong \U^+$ ($z \mapsto z^+$), we have $\Qq$-linear maps ${}_i r, r_i : \U^+ \rightarrow \U^+$, and for $x \in \U^+$, 
\begin{align}
  \label{Fiy}
F_ix -x F_i = \frac{\tK_i^{-1} \, {}_i r(x) - r_i(x) \tK_i}{q_i -q_i^{-1}}. 
\end{align}
Note that $\U^\pm$ are $\N \I$-graded: $\U^+ =\sum_{\nu \in \N\I} \U^+_\nu$, and $\U^+ =\sum_{\nu \in \N\I} \U^-_\nu$. We say an element $x \in \U^+$ (respectively, $y \in \U^-$) is homogeneous if $x \in \U^+_\nu$ (respectively, $y \in \U^-_\nu$) for some $\nu$. In this case, we denote $|x|=\nu$ and $|y|=-\nu$. 

Denote by $\B$ the canonical basis for $\mathbf f$; the isomorphism $\mathbf f \cong \U^\pm$ induces the canonical bases $\B^\pm$ ($b \mapsto b^\pm$) for $\U^\pm$. Let $\U^+_{\Z[q^{-1}]}$ (respectively, $\U^-_{\Z[q^{-1}]}$) denote the $\Z[q^{-1}]$-span of the canonical basis in $\U^+$  (respectively, $\U^-$). 

Let $L(\lad)$ be the highest weight $\U$-module with highest weight vector $\eta_\lad$ of highest weight $\lad\in X^+$, and let ${}^\omega L(\lad)$ be the lowest weight $\U$-module with lowest weight vector $\xi_{-\lad}$ of lowest weight $-\lad$. For $x\in \U^+$, $y \in \U^-$, $\lad \in X^+$ and $\zeta \in X$ such that $\lad +\zeta \in X^+$, it follows by \eqref{eq:Delta} that 
\begin{align}
  \label{eq:y}
y(\xi_{-\lad} \otimes  \eta_{\lad+\zeta}) &= \xi_{-\lad} \otimes y \eta_{\lad+\zeta},
\qquad
x(\xi_{-\lad} \otimes  \eta_{\lad+\zeta}) = x \xi_{-\lad} \otimes   \eta_{\lad+\zeta}.
\end{align}
There is an anti-involution $\rho$ on $\U$ such that, for $i\in \I, \nu \in Y$,
\[
\rho (E_i) =q_i \tK_i F_i, \quad
\rho (F_i) =q_i^{-1} E_i \tK_i^{-1}, \quad
\rho (\tK_\nu) =\tK_{-\nu}. 
\]
According to \cite{K91, Lu93}, there is a bilinear form $(\cdot, \cdot)$ on $L(\mu)$, for $\mu \in X^+$, such that $(\eta_\mu, \eta_\mu)=1$ and $(ux, y) =(x, \rho(u)y)$, for all $x,y \in L(\mu), u \in \U$; a bilinear form $(\cdot, \cdot)$ on ${}^\omega L(\mu)$ is defined similarly. A bilinear form $(\cdot, \cdot)$ on ${}^\omega L(\lad) \otimes L(\mu)$ is defined by $(x\otimes y, x'\otimes y') =(x,x') (y, y')$. 


There exists a unique $\Q(q)$-bilinear form $(\cdot, \cdot)$ on $\Udot$  \cite[26.1.2]{Lu93}, which extends the one on $\U^- (\cong \mathbf{f})$  \cite[1.2.3, 1.2.5]{Lu93}
such that  
\begin{align}
 (\one_{\lad} x \one_\mu, \one_{\lad'} x' \one_{\mu'}) &=0, \text{ for all } x, x' \in \Udot, \text{ unless } \lad =\lad' \text{ and } \mu =\mu',
  \label{eq:ad1} \\
(ux, y) &=(x, \rho(u)y), \text{ for } x, y \in \Udot, u\in \U,
  \label{eq:ad2} \\
(f \one_\lad, f' \one_\lad) &=(f, f'), \text{ for } f, f' \in \U^-, \lad \in X.
 \label{eq:ad3}
 \end{align}
This bilinear form is symmetric. 
Note
\begin{align}
\label{BFhalf}
(\FF{m}_i, \FF{n}_i)  =(\EE{m}_i, \EE{n}_i)  = \delta_{m,n} (q_i^{-2}; q_i^{-2})_m^{-1},
\end{align}
where we have denoted  
\begin{align}
 \label{eq:g}
(a; q_i^{-2})_m =  \prod_{s=0}^{m-1} (1-a q_i^{-2s}),
\qquad
(q_i^{-2}; q_i^{-2})_m =  \prod_{s=1}^{m} (1-q_i^{-2s}).
\end{align}

\subsection{Standard sequences}

We shall often deal with sequences of elements $\{z_\lad\}_{\lad \in X^+}$, where $z_\lad \in {}^\omega L(\lad) \otimes L(\lad+\zeta)$ is
a linear combination of elements of the form 
\begin{align} 
& u\one_\zeta  (\xi_{-\lad} \otimes \eta_{\lad+\zeta} ),
\quad \text{ for } u \in \Udot,
 \label{eq:spanning} 
 \\
& x  \xi_{-\lad} \otimes y \eta_{\lad+\zeta},
\quad \text{ for } (x, y) \in \U^+ \times \U^-,
 \label{eq:spanning2}
\\
&  \text{where } u \text{ and } (x, y)  \text{ run over some finite sets independent of $\lad$}.
 \label{eq:finite}
\end{align}
We shall refer to such a sequence {\em standard}. 
The coefficients of $z_\lad$ usually take a certain form, which we now specify. 

\begin{definition}
  \label{def:seq}
(1) 
A standard sequence $\{z_\lad\}_{\lad \in X^+}$ is said to be {\em bounded} if $z_\lad$ can be written as a linear combination of elements  \eqref{eq:spanning}--\eqref{eq:spanning2} subject to \eqref{eq:finite}, with coefficients being a finite sum of the form
\begin{align}   \label{eq:s1}
\sum_{s\ge 0} \sum_{\vec{i} =(i_1, \ldots, i_s) \in \I^s} f_{\vec{i}}(q) \prod_{a=1}^s q_{i_a}^{-2\langle {i_a}, \lad \rangle}, \text{ 
where $f_{\vec{i}}(q) \in \Q(q)$ is independent of $\lad$.}
\end{align}

(2) 
A standard sequence $\{z_\lad\}_{\lad \in X^+}$ is said to be {\em asymptotically zero}, if $z_\lad$ can be written as a linear combination of  elements \eqref{eq:spanning}--\eqref{eq:spanning2} subject to \eqref{eq:finite}, with coefficients being a finite sum of the form 
\begin{align}   \label{eq:s0}
\sum_{s\ge 1} \sum_{\vec{i} =(i_1, \ldots, i_s) \in \I^s} f_{\vec{i}}(q) \prod_{a=1}^s q_{i_a}^{-2\langle {i_a}, \lad \rangle}, \text{ 
where $f_{\vec{i}}(q) \in \Q(q)$ is independent of $\lad$.}
\end{align}
\end{definition}
We say $\lad$ tends to $\infty$ if $\langle i, \lad \rangle$ tends to $+\infty$, for each $i\in \I$; in this case we shall denote $\lad \mapsto \infty$. Note that the coefficients in \eqref{eq:s1} (respectively, \eqref{eq:s0}) converges in $\Q((q^{-1}))$ to some scalar (respectively, to $0$) as $\lad$ tends to $\infty$. Given bounded standard sequences $\{z_\lad\}_{\lad \in X^+}, \{z_\lad '\}_{\lad \in X^+}$ and $\{z_\lad ''\}_{\lad \in X^+}$, we shall denote 
\[
z_\lad = o(1), \qquad \text{ and  }\;  z_\lad ' = z_\lad '' + o(1),
\]
if $\{z_\lad\}_{\lad \in X^+}$ is asymptotically zero and $\{z_\lad ' - z_\lad ''\}_{\lad \in X^+}$ is asymptotically zero, respectively.

\subsection{Approximations}

There are 2 types of elements in \eqref{eq:spanning}--\eqref{eq:spanning2}, and we shall understand how to approximate one another as $\lad \mapsto \infty$ in a precise fashion.

\begin{lem}
  \label{lem:yEi}
Let $x' \in \U^+, y \in \U^-$ be homogeneous, and $i\in \I$. Then we have
\begin{align}
&E_i (x' \xi_{-\lad} \otimes y \eta_{\lad+\zeta}) 
=  E_i x' \xi_{-\lad} \otimes y \eta_{\lad+\zeta} 
+ \frac{q_i^{\langle i, \zeta +|x'| +|y| \rangle +2}}{q_i -q_i^{-1}} x' \xi_{-\lad} \otimes   {}_i r(y)  \eta_{\lad+\zeta}
   \label{Eixy1}\\
&\qquad\qquad\qquad\qquad\quad
 - \frac{q_i^{-2\langle i, \lad \rangle} q_i^{\langle i,  -\zeta+|x'| \rangle } }{q_i -q_i^{-1}} x' \xi_{-\lad} \otimes r_i(y)  \eta_{\lad+\zeta}.
\notag
\end{align}
Equivalently, we have
\begin{align}
&E_i x' \xi_{-\lad} \otimes y \eta_{\lad+\zeta} 
= E_i (x' \xi_{-\lad} \otimes y \eta_{\lad+\zeta}) 
- \frac{q_i^{\langle i, \zeta +|x'| +|y| \rangle +2}}{q_i -q_i^{-1}} x' \xi_{-\lad} \otimes   {}_i r(y)  \eta_{\lad+\zeta}
   \label{Eixy2}\\
&\qquad\qquad\qquad\qquad\quad
+ \frac{q_i^{-2\langle i, \lad \rangle} q_i^{\langle i,  -\zeta+|x'| \rangle } }{q_i -q_i^{-1}} x' \xi_{-\lad} \otimes r_i(y)  \eta_{\lad+\zeta}.
\notag
\end{align}
\end{lem}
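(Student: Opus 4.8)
The plan is to compute the action of $E_i$ on the pure tensor $x'\xi_{-\lambda}\otimes y\eta_{\lambda+\zeta}$ directly from the coproduct formula \eqref{eq:Delta}, $\Delta(E_i)=E_i\otimes 1+\tK_i\otimes E_i$, and then use the commutation relation \eqref{Eiy} to push the $E_i$ landing on $y\eta_{\lambda+\zeta}$ past $y$ onto the highest weight vector, where it dies. Concretely, first I would write
\[
E_i(x'\xi_{-\lambda}\otimes y\eta_{\lambda+\zeta})
= E_i x'\xi_{-\lambda}\otimes y\eta_{\lambda+\zeta}
+ \tK_i x'\xi_{-\lambda}\otimes E_i y\eta_{\lambda+\zeta}.
\]
In the second term, $\tK_i$ acts on $x'\xi_{-\lambda}$ by the scalar $q_i^{\langle i,\,-\lambda+|x'|\rangle}$, since $x'\xi_{-\lambda}$ has weight $-\lambda+|x'|$; this is where the $q_i^{-\langle i,\lambda\rangle}$-type powers enter. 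Then, again using $E_i\eta_{\lambda+\zeta}=0$, I would apply \eqref{Eiy} to get $E_i y\eta_{\lambda+\zeta} = (E_iy - yE_i)\eta_{\lambda+\zeta} = \dfrac{\tK_i\,{}_ir(y) - r_i(y)\tK_i^{-1}}{q_i-q_i^{-1}}\eta_{\lambda+\zeta}$, and evaluate $\tK_i^{\pm 1}$ on the weight vectors ${}_ir(y)\eta_{\lambda+\zeta}$ and $r_i(y)\eta_{\lambda+\zeta}$, which have weights $\lambda+\zeta+|y|\mp|i|$ — here I must be careful that ${}_ir(y),r_i(y)\in\U^-_{-(|y|-|i|)}$ (a correction to $|y|$ by the simple root), and with $q_i^{\langle i,i\rangle}=q_i^2$ this accounts for the ``$+2$'' in the exponent of the first correction term.

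Collecting these scalars should reproduce \eqref{Eixy1} exactly: the coefficient of $x'\xi_{-\lambda}\otimes{}_ir(y)\eta_{\lambda+\zeta}$ is $\dfrac{1}{q_i-q_i^{-1}}q_i^{\langle i,-\lambda+|x'|\rangle}q_i^{\langle i,\lambda+\zeta+|y|\rangle+2}$, and the total power of $q_i$ is $\langle i,-\lambda+|x'|+\lambda+\zeta+|y|\rangle+2=\langle i,\zeta+|x'|+|y|\rangle+2$, the $\lambda$-dependence cancelling — as it must for this to be a ``bounded'' contribution. For the coefficient of $x'\xi_{-\lambda}\otimes r_i(y)\eta_{\lambda+\zeta}$, the total power is $\langle i,-\lambda+|x'|\rangle - \langle i,\lambda+\zeta+|y|-|i|\rangle = -2\langle i,\lambda\rangle + \langle i,|x'|-\zeta\rangle + (\langle i,|i|\rangle - \langle i,|y|\rangle)$; I would use $|y|+|i|$-bookkeeping (note ${}_ir(y)$ and $r_i(y)$ have degree $|y|-|i|$, i.e.\ $|i|$ less negative than $|y|$) to see that the $|y|$ and $|i|$ contributions cancel against the weight shift, leaving $-2\langle i,\lambda\rangle + \langle i,|x'|-\zeta\rangle$, matching the stated $-q_i^{-2\langle i,\lambda\rangle}q_i^{\langle i,-\zeta+|x'|\rangle}/(q_i-q_i^{-1})$. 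The ``Equivalently'' formula \eqref{Eixy2} is then just \eqref{Eixy1} solved for $E_ix'\xi_{-\lambda}\otimes y\eta_{\lambda+\zeta}$, a trivial rearrangement requiring no further argument.

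The main obstacle is purely bookkeeping: getting every weight and every $\langle i,\cdot\rangle$ pairing right, in particular tracking how the maps ${}_ir$ and $r_i$ shift the $\N\I$-degree (they lower $|y|$ by one copy of $i$, which I would cite from \cite[1.2.13]{Lu93}) and correctly evaluating $\tK_i$ on each weight vector. There is no conceptual difficulty — no convergence or limit argument is needed here, since this lemma is an exact identity in ${}^\omega L(\lambda)\otimes L(\lambda+\zeta)$ for each fixed $\lambda$ — so I would keep the writeup short, displaying the two-step expansion (coproduct, then \eqref{Eiy}) and then simply collecting the $q_i$-exponents, leaving the reader to check the arithmetic of the pairings.
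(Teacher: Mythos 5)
Your proposal is correct and follows exactly the paper's proof: expand via $\Delta(E_i)=E_i\otimes 1+\tK_i\otimes E_i$, use $E_i\eta_{\lad+\zeta}=0$ to replace $E_iy\eta_{\lad+\zeta}$ by $\frac{\tK_i\,{}_ir(y)-r_i(y)\tK_i^{-1}}{q_i-q_i^{-1}}\eta_{\lad+\zeta}$ via \eqref{Eiy}, and evaluate the $\tK_i^{\pm1}$ on weight vectors; \eqref{Eixy2} is indeed just a rearrangement. One small caution on your bookkeeping for the second correction term: in $r_i(y)\tK_i^{-1}\eta_{\lad+\zeta}$ the $\tK_i^{-1}$ acts directly on $\eta_{\lad+\zeta}$ (weight $\lad+\zeta$), giving $q_i^{-\langle i,\lad+\zeta\rangle}$ outright, so $|y|$ and the simple root never enter that exponent — there is no cancellation to arrange, and your intermediate expression $-\langle i,\lad+\zeta+|y|-|i|\rangle$ is not the weight of anything in play, even though you land on the correct total $-2\langle i,\lad\rangle+\langle i,-\zeta+|x'|\rangle$.
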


\begin{proof}
Using the comultiplication formula \eqref{eq:Delta} and the identity \eqref{Eiy}, we compute that 
\begin{align*}
&E_i (x' \xi_{-\lad} \otimes y \eta_{\lad+\zeta}) 
   \\
&= E_i x' \xi_{-\lad} \otimes y \eta_{\lad+\zeta} + \tK_i x' \xi_{-\lad} \otimes E_i y \eta_{\lad+\zeta}
 \notag \\
&= E_i x' \xi_{-\lad} \otimes y \eta_{\lad+\zeta} 
 + \tK_i x' \xi_{-\lad} \otimes \frac{\tK_i \, {}_i r(y) - r_i(y) \tK_i^{-1}}{q_i -q_i^{-1}} \eta_{\lad+\zeta}
\notag \\
&=  E_i x' \xi_{-\lad} \otimes y \eta_{\lad+\zeta} 
+ \frac{q_i^{\langle i, \zeta +|x'| +|y| \rangle +2}}{q_i -q_i^{-1}} x' \xi_{-\lad} \otimes   {}_i r(y)  \eta_{\lad+\zeta}
- \frac{q_i^{\langle i,  -2\lad-\zeta+|x'| \rangle } }{q_i -q_i^{-1}} x' \xi_{-\lad} \otimes r_i(y)  \eta_{\lad+\zeta}.
\notag
\end{align*}
This proves \eqref{Eixy1}. The identity \eqref{Eixy2} follows from \eqref{Eixy1}. 
\end{proof}

\begin{lem}
  \label{lem:xFi}
Let $x \in \U^+, y \in \U^-$ be homogeneous and $i\in \I$. Then we have
\begin{align}
&F_i (x \xi_{-\lad} \otimes y \eta_{\lad+\zeta}) 
=  x \xi_{-\lad} \otimes F_i y \eta_{\lad+\zeta}
+ \frac{q_i^{-\langle i, \zeta +|x| +|y| \rangle +2}}{q_i -q_i^{-1}} {}_i r(x) \xi_{-\lad} \otimes   y \eta_{\lad+\zeta}
   \label{Fixy1}\\
&\qquad\qquad\qquad\qquad\quad
- \frac{q_i^{-2\langle i,\lad\rangle} q_i^{\langle i, -\zeta-|y| \rangle } }{q_i -q_i^{-1}} r_i(x) \xi_{-\lad} \otimes y \eta_{\lad+\zeta}.
\notag
\end{align}
\end{lem}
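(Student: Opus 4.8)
The plan is to imitate the proof of \lemref{lem:yEi} almost verbatim, with the roles of $E_i$ and $F_i$ (and of the highest and lowest weight factors) interchanged. The two inputs are the comultiplication formula $\Delta(F_i) = F_i \otimes \tK_i^{-1} + 1 \otimes F_i$ from \eqref{eq:Delta} and the commutator identity \eqref{Fiy} in $\U^+$, together with a careful reading of the weights on which the various $\tK_i^{\pm1}$ act.

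First I would expand
\[
F_i(x\xi_{-\lad} \otimes y\eta_{\lad+\zeta}) = (F_i x\xi_{-\lad}) \otimes \tK_i^{-1}(y\eta_{\lad+\zeta}) \;+\; x\xi_{-\lad} \otimes F_i(y\eta_{\lad+\zeta}),
\]
where the last summand is already in the desired shape and yields the term $x\xi_{-\lad}\otimes F_i y\eta_{\lad+\zeta}$ of \eqref{Fixy1}. For the first summand the key point --- dual to the use of $E_i\eta_\mu=0$ in \lemref{lem:yEi} --- is that $\xi_{-\lad}$ is a lowest weight vector, so $F_i\xi_{-\lad}=0$ and hence, by \eqref{Fiy},
\[
F_i x\xi_{-\lad} = (F_i x - xF_i)\xi_{-\lad} = \frac{\tK_i^{-1}\,{}_i r(x) - r_i(x)\tK_i}{q_i-q_i^{-1}}\,\xi_{-\lad}.
\]

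It then remains to collect the scalars coming from $\tK_i^{\pm1}$. Since $x\in\U^+_{|x|}$ we have ${}_i r(x), r_i(x)\in\U^+_{|x|-i}$, so $\tK_i^{-1}\,{}_i r(x)\xi_{-\lad} = q_i^{-\langle i,-\lad+|x|-i\rangle}\,{}_i r(x)\xi_{-\lad}$ and $\tK_i\xi_{-\lad} = q_i^{-\langle i,\lad\rangle}\xi_{-\lad}$, while the outer $\tK_i^{-1}$ acting on the weight-$(\lad+\zeta+|y|)$ vector $y\eta_{\lad+\zeta}$ contributes $q_i^{-\langle i,\lad+\zeta+|y|\rangle}$. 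Multiplying these and using $\langle i,i\rangle=2$, the ${}_i r(x)$-term picks up $q_i^{-\langle i,\zeta+|x|+|y|\rangle+2}/(q_i-q_i^{-1})$ and the $r_i(x)$-term picks up $-q_i^{-2\langle i,\lad\rangle}q_i^{\langle i,-\zeta-|y|\rangle}/(q_i-q_i^{-1})$, which is precisely \eqref{Fixy1}. I expect no genuine obstacle here; the only things to watch are the sign and, as just indicated, the bookkeeping of weights, together with remembering that this time the relevant commutator identity \eqref{Fiy} lives in $\U^+$ and is applied on the lowest weight factor $\xi_{-\lad}$, rather than \eqref{Eiy} in $\U^-$ applied on the highest weight factor as in \lemref{lem:yEi}.
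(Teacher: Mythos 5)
Your proposal is correct and coincides with the paper's own proof: both expand via $\Delta(F_i)=F_i\otimes\tK_i^{-1}+1\otimes F_i$, use $F_i\xi_{-\lad}=0$ together with \eqref{Fiy} to rewrite $F_ix\xi_{-\lad}$, and then collect the $\tK_i^{\pm1}$ weight factors. Your weight bookkeeping checks out exactly against the exponents in \eqref{Fixy1}.
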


\begin{proof}  
Using the comultiplication formula \eqref{eq:Delta} and the identity \eqref{Fiy}, we compute that 
\begin{align*}
&F_i (x \xi_{-\lad} \otimes y \eta_{\lad+\zeta}) 
   \\
&= x \xi_{-\lad} \otimes F_i y \eta_{\lad+\zeta}
 + F_i x \xi_{-\lad} \otimes \tK_i^{-1} y \eta_{\lad+\zeta}  \notag \\
&=   x \xi_{-\lad} \otimes F_i y \eta_{\lad+\zeta}
+ \frac{\tK_i^{-1} \, {}_i r(x) - r_i(x) \tK_i}{q_i -q_i^{-1}}
 \xi_{-\lad} \otimes \tK_i^{-1} y \eta_{\lad+\zeta} 
\notag \\
&=  x \xi_{-\lad} \otimes F_i y \eta_{\lad+\zeta}
+ \frac{q_i^{-\langle i, \zeta +|x| +|y| \rangle +2}}{q_i -q_i^{-1}} {}_i r(x) \xi_{-\lad} \otimes   y \eta_{\lad+\zeta}
- \frac{q_i^{\langle i,  -2\lad-\zeta-|y| \rangle } }{q_i -q_i^{-1}} r_i(x) \xi_{-\lad} \otimes y \eta_{\lad+\zeta}.
\notag
\end{align*}
The lemma is proved. 
\end{proof}
\begin{lem} 
  \label{lem:stable}
 Let $g \in \Udot$. 
 \begin{enumerate}
\item 
If $\{z_\lad\}_{\lad \in X^+}$
 is a bounded standard sequence, then so is $\{g z_\lad \}_{\lad \in X^+}$. 
\item
 If $\{z_\lad\}_{\lad \in X^+}$ is an asymptotically-zero standard sequence, then so is $\{g z_\lad \}_{\lad \in X^+}$. (We shall write $g \cdot o(1) =o(1)$.)

 \end{enumerate}
\end{lem}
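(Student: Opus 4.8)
The plan is to reduce the statement to the case where $g$ is a single algebra generator $E_i$, $F_i$, or an idempotent $\one_\nu$, and then to invoke the two ``approximation'' lemmas already established. Since $\Udot$ is generated (as a non-unital algebra) by the elements $E_i \one_\nu$, $F_i \one_\nu$, $\one_\nu$ for $i \in \I$ and $\nu \in X$, and since a product of bounded (respectively, asymptotically-zero) sequences with bounded ones stays bounded (respectively, asymptotically-zero) — the coefficients in \eqref{eq:s1} and \eqref{eq:s0} are closed under multiplication, and multiplying \eqref{eq:s0} by \eqref{eq:s1} lands back in \eqref{eq:s0} because the total number of $q^{-2\langle i, \lad\rangle}$-factors only increases — it suffices to treat $g \in \{E_i, F_i, \one_\nu\}$. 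Both claims (1) and (2) will then follow simultaneously from the same computation, the only difference being whether the input coefficients are of the form \eqref{eq:s1} or \eqref{eq:s0}.

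First I would handle the idempotents: $\one_\nu$ acting on an element of the form \eqref{eq:spanning} either kills it or fixes it (projecting onto the appropriate weight space of ${}^\omega L(\lad)\otimes L(\lad+\zeta)$), and similarly on \eqref{eq:spanning2} since $x\xi_{-\lad}\otimes y\eta_{\lad+\zeta}$ lies in a single weight space whose weight is $-\lad+|x| + \lad+\zeta+|y| = \zeta+|x|+|y|$, independent of $\lad$; so $\one_\nu$ just discards some terms from the finite spanning set and leaves the coefficients untouched. Next, for $g = E_i$ or $g = F_i$ acting on a summand of type \eqref{eq:spanning}, we have $E_i \cdot u\one_\zeta(\xi_{-\lad}\otimes\eta_{\lad+\zeta}) = (E_i u)\one_\zeta(\xi_{-\lad}\otimes\eta_{\lad+\zeta})$ with $E_iu \in \Udot$ from the same finite set (enlarged once and for all), so nothing changes. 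The real content is $g = E_i$ or $F_i$ acting on a summand of type \eqref{eq:spanning2}: here I apply \lemref{lem:yEi} (for $E_i$) or \lemref{lem:xFi} (for $F_i$). In each case the result is a sum of three terms, each again of the form \eqref{eq:spanning2} (after using $E_i x' \in \U^+$, ${}_i r(y), r_i(y)\in\U^-$, etc., all homogeneous, drawn from a $\lad$-independent finite set), and the new coefficients are the old coefficient — of shape \eqref{eq:s1} or \eqref{eq:s0} — multiplied by one of $\frac{q_i^{\langle i,\zeta+|x'|+|y|\rangle+2}}{q_i-q_i^{-1}}$, $1$, or $\frac{q_i^{-2\langle i,\lad\rangle}q_i^{\langle i,-\zeta+|x'|\rangle}}{q_i-q_i^{-1}}$. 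The first two multipliers are scalars in $\Q(q)$ independent of $\lad$, so they preserve the shape \eqref{eq:s1} (resp. \eqref{eq:s0}); the third contributes an extra factor $q_i^{-2\langle i,\lad\rangle}$, which increases the counter $s$ by one, hence also preserves \eqref{eq:s1} and — crucially for part (2) — turns \eqref{eq:s1} into \eqref{eq:s0} as well, so an asymptotically-zero input stays asymptotically-zero.

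I do not expect a serious obstacle here; the statement is essentially bookkeeping. The one point requiring a little care is the bookkeeping of ``finite sets independent of $\lad$'': I must check that applying a fixed $g$ only enlarges these sets by a bounded amount and does not introduce $\lad$-dependence into the spanning elements themselves (only into the scalar coefficients, where it is allowed). This is immediate from the explicit formulas in Lemmas~\ref{lem:yEi} and \ref{lem:xFi}, since ${}_i r$, $r_i$, and left multiplication by $E_i$, $F_i$ on $\U^\pm$ send homogeneous elements to homogeneous elements and send a finite set to a finite set. A second minor point is closure of the coefficient classes \eqref{eq:s1}, \eqref{eq:s0} under addition and under multiplication by the scalars $\frac{q_i^{c}}{q_i - q_i^{-1}}$ and by $q_i^{-2\langle i,\lad\rangle}$ — all routine, and noted above. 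Assembling these observations via the generation of $\Udot$ and induction on the length of a word in the generators completes the proof.
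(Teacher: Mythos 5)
Your proof is correct and follows essentially the same route as the paper: reduce by linearity and induction on word length to the generators, dispose of the idempotent and type-\eqref{eq:spanning} cases trivially, and apply \lemref{lem:yEi} and \lemref{lem:xFi} to see that the coefficients stay in the class \eqref{eq:s1} (respectively \eqref{eq:s0}), the key point being that the only $\lad$-dependent multiplier is $q_i^{-2\langle i,\lad\rangle}$. The paper's own proof is terser but identical in substance.
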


\begin{proof}
We shall prove (1) only, as the proof of (2) is entirely similar.

If $g=g_1 +g_2$ and if $g_i z_\lad$ is a bounded standard sequences (for $i=1,2$), then so is $g z_\lad$. So we can assume $g =E_{i_a} \cdots E_{i_1} F_{j_b} \cdots F_{j_1} \one_\zeta$, and we only need to concern about $g$ acting on elements of the form \eqref{eq:spanning2}. A simple induction on $a+b$ reduces the proof to two basic cases for $g =E_i  \one_\zeta$ and $g =F_i \one_\zeta$, which in turn follow by applying the identities \eqref{Eixy1} and \eqref{Fixy1}, respectively. 
\end{proof}

Given $\mu =\sum_{i\in \I} n_i i \in \N\I$, we define its height $\hgt \mu =\sum_i n_i$. Recall $\B$ is the canonical basis of $\bff$, and any $b\in \B$ has weight $|b| \in \N \I$. 
Given $N, N'\in \N$, we shall denote by $P^+(N)$ (respectively, $P^-(N)$) the $\Q(q)$-submodule of $\U^+$ (respectively, $\U^-$) spanned by the elements $b^+ \in \B^+$ (respectively, $b^- \in \B^-$) , for $b\in \B$ such that $\hgt |b| \le N$. For $\zeta \in X$, we denote by $P(N, N')$  the $\Q(q)$-submodule of $\Udot$ spanned by the elements $b_1^+b_2^-\one_\zeta$, where $b_1, b_2 \in \B$ are such that $\hgt |b_1| \le N, \hgt |b_2| \le N'$ and $|b_1| - |b_2| =\zeta$. The following is the most crucial technical construction in this paper. 

\begin{prop}
  \label{prop:approx}
  Let  $\zeta \in X$.
 \begin{enumerate}
 \item
Given $x \in \U^+, y \in \U^-$, there exists a unique element $x \fus_\zeta y   \in \Udot \one_\zeta$ such that 
\begin{align}  
  \label{starxy}
 (x \fus_\zeta y)  (\xi_{-\lad} \otimes  \eta_{\lad+\zeta} )
 -x \xi_{-\lad} \otimes y \eta_{\lad+\zeta} 
= o(1). 
\end{align}

\item
Given $u  \in \Udot \one_\zeta$, there exists a unique element $u'' =\sum_k x_{k} \otimes y_{k} \in \U^+ \otimes \U^-$ such that 
\begin{align} 
 \label{Eixy4}
u  ( \xi_{-\lad} \otimes  \eta_{\lad+\zeta} )
- \sum_{k} x_{k} \xi_{-\lad} \otimes y_{k} \eta_{\lad+\zeta}
= o(1). 
\end{align}
\end{enumerate}
\end{prop}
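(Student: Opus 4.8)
\medskip
\noindent\textbf{Proof proposal.}
I would regard (1) and (2) as mutually inverse constructions: existence in each part is a direct computation with the approximation identities of \lemref{lem:yEi}, \lemref{lem:xFi} and \lemref{lem:stable}, while all uniqueness assertions --- hence also the well-definedness and $\Q(q)$-linearity of $x\otimes y\mapsto x\fus_\zeta y$ and of $u\mapsto u''$ --- follow from one \emph{asymptotic faithfulness} statement, proved last. For existence in (1), use $\Q(q)$-linearity in $x$ to reduce to $x$ homogeneous and induct on $\hgt|x|$. If $\hgt|x|=0$, say $x=c\in\Q(q)$, then $x\fus_\zeta y:=c\,y^-\one_\zeta$ works exactly, by \eqref{eq:y}. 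If $\hgt|x|\ge 1$, write $x=\sum_i E_i x_i'$ with the $x_i'\in\U^+$ homogeneous of smaller height (using $\U^+=\Q(q)\cdot 1+\sum_i E_i\U^+$); by linearity it is enough to take $x=E_i x'$, for which I set
\[
(E_i x')\fus_\zeta y \;:=\; E_i\big(x'\fus_\zeta y\big)\one_\zeta \;-\; \frac{q_i^{\langle i,\,\zeta+|x'|+|y|\rangle+2}}{q_i-q_i^{-1}}\,\big(x'\fus_\zeta\, {}_i r(y)\big)\one_\zeta ,
\]
both summands being defined by the inductive hypothesis. Feeding in the inductive hypothesis, absorbing $E_i\cdot o(1)=o(1)$ via \lemref{lem:stable}(2), and then substituting the rearranged identity \eqref{Eixy2}, one verifies \eqref{starxy}; the (non-unique) choice of decomposition $x=\sum_i E_i x_i'$ is seen to be irrelevant once uniqueness is established.

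\medskip
\noindent\emph{Existence in (2).} As $\Udot$ is generated by the $E_i$, $F_i$ and the idempotents, $u$ is a $\Q(q)$-linear combination of elements $M\one_\zeta$ with $M$ a monomial in the $E_i, F_i$, so it suffices to treat one such; note $M\one_\zeta(\xi_{-\lad}\otimes\eta_{\lad+\zeta})=M(\xi_{-\lad}\otimes\eta_{\lad+\zeta})$ since $\xi_{-\lad}\otimes\eta_{\lad+\zeta}$ has weight $\zeta$. Applying $M$ to $\xi_{-\lad}\otimes\eta_{\lad+\zeta}$ one generator at a time via \eqref{Eixy1} and \eqref{Fixy1} (starting from $\xi_{-\lad}\otimes\eta_{\lad+\zeta}$, which has the requisite form with $x'=y=1$), one stays within $\Q(q)$-spans of elements $p\xi_{-\lad}\otimes s\eta_{\lad+\zeta}$ with $p\in\U^+$, $s\in\U^-$ homogeneous, each step contributing a coefficient that is a constant or a constant times $q_i^{-2\langle i,\lad\rangle}$. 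Thus
\[
u(\xi_{-\lad}\otimes\eta_{\lad+\zeta})\;=\;\sum_k c_k(\lad)\; p_k\xi_{-\lad}\otimes s_k\eta_{\lad+\zeta}
\]
for finitely many homogeneous $p_k\in\U^+$, $s_k\in\U^-$ with each $c_k(\lad)$ of the shape \eqref{eq:s1}. Splitting off the constant part $c_k^{(0)}\in\Q(q)$ of each $c_k(\lad)$ (so $c_k(\lad)-c_k^{(0)}$ has the shape \eqref{eq:s0}), the element $u'':=\sum_k c_k^{(0)}\,p_k\otimes s_k$ satisfies \eqref{Eixy4}.

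\medskip
\noindent\emph{Uniqueness.} It suffices to prove: (a) if $v=\sum_k x_k\otimes y_k\in\U^+\otimes\U^-$ and $\sum_k x_k\xi_{-\lad}\otimes y_k\eta_{\lad+\zeta}=o(1)$, then $v=0$; and (b) if $w\in\Udot\one_\zeta$ and $w(\xi_{-\lad}\otimes\eta_{\lad+\zeta})=o(1)$, then $w=0$. The common engine: for $\lad$ large relative to the bounded heights occurring, the vectors $b^+\xi_{-\lad}$ (resp. $c^-\eta_{\lad+\zeta}$) lie in the canonical basis of ${}^\omega L(\lad)$ (resp. $L(\lad+\zeta)$) and so are linearly independent, whence the $b^+\xi_{-\lad}\otimes c^-\eta_{\lad+\zeta}$ are linearly independent. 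Using \eqref{Eixy1} and \eqref{Fixy1} to rewrite every spanning element \eqref{eq:spanning} in terms of elements \eqref{eq:spanning2}, and expanding the latter in the canonical bases of $\U^\pm$, any bounded standard sequence acquires, for $\lad$ large, a \emph{unique} expression $\sum_{b,c}h_{b,c}(\lad)\,b^+\xi_{-\lad}\otimes c^-\eta_{\lad+\zeta}$ with each $h_{b,c}$ a finite sum $\sum_{\vec i}f_{\vec i}(q)\prod_a q_{i_a}^{-2\langle i_a,\lad\rangle}$; if such a sequence is moreover asymptotically zero, comparison of expansions forces each $h_{b,c}$ to have vanishing constant term, and then --- by linear independence of the functions $\lad\mapsto\prod_a q_{i_a}^{-2\langle i_a,\lad\rangle}$ --- any $h_{b,c}$ that is constant vanishes. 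For (a): writing $x_k=\sum_b\alpha_{kb}b^+$, $y_k=\sum_c\beta_{kc}c^-$, the coefficients $h_{b,c}=\sum_k\alpha_{kb}\beta_{kc}$ are constants, so they vanish and $v=0$. For (b): expand $w=\sum_j a_j\, b_j^+ c_j^-\one_\zeta$ in the PBW-type basis $\{\, b_1^+ b_2^-\one_\zeta \,\}$ of $\Udot\one_\zeta$ coming from the triangular decomposition, with the pairs $(b_j,c_j)$ distinct and $a_j\neq 0$, and suppose $w\neq 0$; put $n=\max_j\hgt|b_j|$ and fix $j_0$ with $\hgt|b_{j_0}|=n$. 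By \eqref{eq:y}, $w(\xi_{-\lad}\otimes\eta_{\lad+\zeta})=\sum_j a_j\, b_j^+(\xi_{-\lad}\otimes c_j^-\eta_{\lad+\zeta})$; writing each $b_j^+$ as a $\Q(q)$-combination of $E$-monomials and iterating \eqref{Eixy1}, and observing that the ``$\U^+$-degree'' $\hgt|b'|$ of a term $b'^+\xi_{-\lad}\otimes c'^-\eta_{\lad+\zeta}$ increases by $1$ only at the first summand of \eqref{Eixy1} and is unchanged at the other two, the top-$\U^+$-degree part of $b_j^+(\xi_{-\lad}\otimes c_j^-\eta_{\lad+\zeta})$ equals $b_j^+\xi_{-\lad}\otimes c_j^-\eta_{\lad+\zeta}$ with coefficient $1$, and there is no term of larger degree. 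Hence the coefficient $h_{b_{j_0},c_{j_0}}(\lad)$ of $w(\xi_{-\lad}\otimes\eta_{\lad+\zeta})$ is $a_{j_0}\neq 0$, a nonzero constant, contradicting that the sequence is asymptotically zero. Applying (b) to the difference of two candidate values in (1), and (a) to the difference of two candidate values in (2), finishes uniqueness, and with it the well-definedness.

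\medskip
\noindent\emph{Main obstacle.} The delicate point is the $\U^+$-degree bookkeeping in (b): one must verify that iterating \eqref{Eixy1} produces, in top degree, exactly $b_j^+\xi_{-\lad}\otimes c_j^-\eta_{\lad+\zeta}$ with coefficient $1$ and no interference from lower- or higher-degree terms, so that the leading coefficient of $w(\xi_{-\lad}\otimes\eta_{\lad+\zeta})$ in the canonical-basis expansion recovers the structure constant of $w$ in the basis of $\Udot\one_\zeta$ exactly. The other ingredients --- the PBW-type basis of $\Udot\one_\zeta$ from the triangular decomposition, non-vanishing and linear independence of $b^+\xi_{-\lad}$ and $c^-\eta_{\lad+\zeta}$ for $\lad\gg 0$, and linear independence of the $q$-power characters $\lad\mapsto\prod_a q_{i_a}^{-2\langle i_a,\lad\rangle}$ --- are standard, and \lemref{lem:yEi}, \lemref{lem:xFi} and \lemref{lem:stable} supply the rest.
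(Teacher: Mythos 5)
Your existence arguments coincide with the paper's: part (1) is proved by the same induction on $\hgt|x|$ with the same recursive definition $x\fus_\zeta y=E_i(x'\fus_\zeta y)-\frac{q_i^{\langle i,\zeta+|x'|+|y|\rangle+2}}{q_i-q_i^{-1}}\,x'\fus_\zeta\,{}_ir(y)$, and your step-by-step application of \eqref{Eixy1}--\eqref{Fixy1} for part (2) is a mild repackaging of the paper's induction on $\hgt|x|$ for $u=xy\one_\zeta$. Where you genuinely diverge is uniqueness. The paper disposes of it in three lines: if $z\in\Udot\one_\zeta$ has $z(\xi_{-\lad}\otimes\eta_{\lad+\zeta})=o(1)$, then $(z_\lad,z_\lad)\to 0$, while by \cite[26.2.3]{Lu93} $(z_\lad,z_\lad)\to(z,z)$; hence $(z,z)=0$ and $z=0$ by the (almost-orthonormality, hence anisotropy, of the) bilinear form. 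You instead argue by bare linear algebra: linear independence of the vectors $b^+\xi_{-\lad}\otimes c^-\eta_{\lad+\zeta}$ for $\lad\gg0$, the observation that rewriting any presentation into this canonical form preserves the coefficient shapes \eqref{eq:s1}/\eqref{eq:s0}, and a leading-term analysis showing the top-height coefficient of $w(\xi_{-\lad}\otimes\eta_{\lad+\zeta})$ recovers the top structure constant of $w$ in the basis $\{b_1^+b_2^-\one_\zeta\}$. This is correct (your degree bookkeeping is exactly the paper's ``Claim 2'' triangularity statement $u''\in x\otimes y+P^+(\hgt|x|-1)\otimes P^-(\hgt|y|-1)$, re-derived), and it buys independence from the bilinear form and from Lusztig's convergence theorem --- which is conceptually appealing, since the paper later reuses the same form-based trick in Lemma~\ref{lem:lim0}. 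The cost is length and a few facts you should cite or prove: that $\{b^+\xi_{-\lad}\}_{\hgt|b|\le N}$ is part of the canonical basis of ${}^\omega L(\lad)$ for $\lad\gg0$, and that the constant term of a coefficient of shape \eqref{eq:s1} is detected simply by letting $\lad\mapsto\infty$ (you invoke linear independence of the characters $\lad\mapsto\prod_aq_{i_a}^{-2\langle i_a,\lad\rangle}$, which is a heavier hammer than needed and itself requires the functionals $\langle i,\cdot\rangle$ to be independent --- taking limits in $\Q((q^{-1}))$ suffices). One small omission relative to the paper: the paper's existence proof records the refinement $x\fus_\zeta y\in xy\one_\zeta+P(\hgt|x|-1,\hgt|y|-1)$, which is quoted later (e.g., in the proof of Theorem~\ref{thm:CB-fCB}); your version proves the proposition as stated but does not record this.
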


\begin{proof}
(1) 
For the existence, we shall prove the following more precise statement. 

{\bf Claim 1.} For $x \in \U^+$ and $y \in \U^-$ homogeneous, there exists $x \fus_\zeta y \in P(\hgt |x|, \hgt |y|)$ of the form 
\begin{align}  \label{starxy1}
x \fus_\zeta y \in xy \one_\zeta + P(\hgt |x|-1, \hgt |y| -1)
\text{ such that \eqref{starxy} holds. 
}
\end{align}

We prove Claim~1 by induction on $\hgt |x|$. The case when $\hgt |x|=0$ clearly follows by \eqref{eq:y}. If $\hgt |x| >0$, we can assume $x$ is of the form $x =E_{i} x'$, for some $x' \in \U^+$. We observe by \eqref{Eixy2} that 
\begin{align}   \label{Eixy3} 
&E_i x' \xi_{-\lad} \otimes y \eta_{\lad+\zeta} 
= E_i (x' \xi_{-\lad} \otimes y \eta_{\lad+\zeta}) 
- \frac{q_i^{\langle i, \zeta +|x'| +|y| \rangle +2}}{q_i -q_i^{-1}} x' \xi_{-\lad} \otimes  {}_i r(y)  \eta_{\lad+\zeta}
   +  o(1).
\end{align}
Since $\hgt |x'| < \hgt |x|$, the inductive assumption can be applied to $x' \xi_{-\lad} \otimes y \eta_{\lad+\zeta}$ and $x' \xi_{-\lad} \otimes {}_i r(y) \eta_{\lad+\zeta}$ on the RHS\eqref{Eixy3}, and we have 
\begin{align*}
x' \xi_{-\lad} \otimes y \eta_{\lad+\zeta}
&= (x' \fus_{\zeta} y) ( \xi_{-\lad} \otimes y \eta_{\lad+\zeta} ) + o(1),
\\
x' \xi_{-\lad} \otimes {}_i r(y) \eta_{\lad+\zeta}
&= (x' \fus_{\zeta} {}_i r(y)) ( \xi_{-\lad} \otimes y \eta_{\lad+\zeta} ) + o(1). 
\end{align*}
By Lemma~\ref{lem:stable}, $E_i \cdot o(1)$ is of the form $o(1)$. Hence Equation \eqref{Eixy3} can be written as 
\begin{align*}
E_i x' \xi_{-\lad} \otimes y \eta_{\lad+\zeta}
&= \Big( E_i (x' \fus_\zeta y)  
- \frac{q_i^{\langle i, \zeta +|x'| +|y| \rangle +2}}{q_i -q_i^{-1}} 
x' \fus_\zeta  {}_i r(y) \Big)   ( \xi_{-\lad} \otimes   \eta_{\lad+\zeta} )
+  o(1). 
\end{align*}
By the inductive assumption, we have 
\[
E_i(x' \fus_\zeta y) \in E_i(x' y \one_\zeta + P(\hgt |x'|-1, \hgt |y| -1) ) \subseteq xy \one_\zeta + P(\hgt |x|-1, \hgt |y| -1)
\]
and $x' \fus_\zeta {}_i r(y) \in P(\hgt |x|-1, \hgt |y| -1)$. Therefore, setting 
\[
x \fus_\zeta y:= E_i (x' \fus_\zeta y)  - \frac{q_i^{\langle i, \zeta +|x'| +|y| \rangle +2}}{q_i -q_i^{-1}} 
x' \fus_\zeta  {}_i r(y)
\]
will satisfy \eqref{starxy} and \eqref{starxy1}. 

Assume that another element $w \in \Udot \one_\zeta$ also satisfies the same property \eqref{starxy} as $x \fus_\zeta y$. Set $z:=w -x \fus_\zeta y,$ and $z_\lad :=z (\xi_{-\lad} \otimes  \eta_{\lad+\zeta})$. Then $z_\lad = o(1)$ by using \eqref{starxy} twice. Thus $(z_\lad, z_\lad)$ converges to $0$, as $\lad$ tends to $\infty$. On the other hand, by \cite[26.2.3]{Lu93}, we have $(z_\lad, z_\lad)$ converges to $(z, z)$, as $\lad$ tends to $\infty$. 
Hence we must have $(z, z)=0$, and whence $z=0$, i.e., $w =x \fus_\zeta y$.

(2) 
For the existence, we shall prove a more precise statement.

{\bf Claim 2.} For $u =xy \one_\zeta \in \Udot \one_\zeta$ with $x \in \U^+$ and $y \in \U^-$ homogeneous, there exists $u'' \in P^+(\hgt |x|) \otimes P^-(\hgt |y|)$ of the form 
\begin{align}  \label{starxy2}
u'' \in x \otimes y  + P^+(\hgt |x|-1) \otimes P^- (\hgt |y| -1)
\text{ such that \eqref{Eixy4} holds. 
}
\end{align}

We prove the existence by induction on $\hgt |x|$. If $|x| =0$, then the statement follows by \eqref{eq:y}. Assume $\hgt |x| >0$, we can assume $x =E_i x'$, for some $x' \in \U^+$, and so $u =E_i x' y\one_\zeta$. 
Hence, by the inductive assumption on $x' y \one_\zeta$ (and recalling $E_i \cdot o(1) =o(1)$ by Lemma~\ref{lem:stable}), we have
\begin{align}
  \label{eq:Eiu2}  
u ( \xi_{-\lad} \otimes \eta_{\lad+\zeta} )
& = E_i \cdot x'y \one_\zeta ( \xi_{-\lad} \otimes \eta_{\lad+\zeta} )
\\
&= E_i (x' \xi_{-\lad} \otimes y \eta_{\lad+\zeta} + \sum_{\ell} x'_{\ell} \xi_{-\lad} \otimes y'_{\ell} \eta_{\lad+\zeta})
+ o(1),
 \notag
\end{align}
for some $\sum_{\ell} x'_{\ell} \otimes y'_{\ell} \in P^+(\hgt |x|-2) \otimes P^-(\hgt |y| -1)$. 
By applying \eqref{Eixy1},  we see that RHS\eqref{eq:Eiu2} $=u'' ( \xi_{-\lad} \otimes \eta_{\lad+\zeta} ) +o(1)$, for $u''$ of the form \eqref{starxy2}. 
The uniqueness can be established by the same arguments as in (1). 
\end{proof}

\subsection{To infinity}

Recall Definition~\ref{def:seq} for (asymptotically-zero) bounded standard sequences.

\begin{lem}
  \label{lem:lim0}
Let $\zeta \in X$, and let $\{z_\lad \}_{\lad \in X^+} \in {}^\omega L(\lad) \otimes L(\lad+\zeta)$ be a bounded standard sequence. Then,
$(z_\lad, u(\xi_{-\lad} \otimes \eta_{\lad+\zeta})) \in \Q(q)$ (and respectively, $(z_\lad, x \xi_{-\lad} \otimes y\eta_{\lad+\zeta}) \in \Q(q)$) converges in $\Q((q^{-1}))$ as $\lad $ tends to $\infty$, for any $u \in \Udot$, $x \in \U^+$, and $y \in \U^-$.

Moreover, the following statements (a)--(d) for $\{z_\lad\}_{\lad \in X^+}$ are equivalent:
\begin{enumerate}
\item[(a)]
$\{z_\lad \}_{\lad \in X^+}$ is asymptotically-zero;

\item[(b)]
$(z_\lad, u(\xi_{-\lad} \otimes \eta_{\lad+\zeta}))$ converges in $\Q((q^{-1}))$ to $0$ as $\lad$ tends to $\infty$, for any $u \in \Udot$;

\item[(c)]
$(z_\lad, x \xi_{-\lad} \otimes y\eta_{\lad+\zeta})$ converges in $\Q((q^{-1}))$ to $0$ as $\lad $ tends to $\infty$, for any $x \in \U^+$ and $y \in \U^-$;

\item[(d)]
$(z_\lad, z'_{\lad})$ converges in $\Q((q^{-1}))$ to $0$ as $\lad $ tends to $\infty$, for any bounded standard sequence $\{z'_\lad\}_{\lad \in X^+}$.
\end{enumerate}
\end{lem}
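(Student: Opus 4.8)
The plan is to first settle the convergence claims, and then prove the equivalences by a cyclic chain of implications, using the bilinear form estimates already available. For convergence, observe that by \eqref{eq:y} and Lemma~\ref{lem:xFi} (or rather its iteration together with Lemma~\ref{lem:yEi}), applying any $u \in \Udot$ to $\xi_{-\lad}\otimes\eta_{\lad+\zeta}$ produces a bounded standard sequence; the same is trivially true of $x\xi_{-\lad}\otimes y\eta_{\lad+\zeta}$. So it suffices to show that the pairing $(z_\lad, z'_\lad)$ of two bounded standard sequences converges in $\Q((q^{-1}))$. Expanding both in terms of the spanning elements \eqref{eq:spanning}--\eqref{eq:spanning2}, the key input is that the bilinear form on ${}^\omega L(\lad)\otimes L(\lad+\zeta)$ factorizes as a product of forms on the two tensor factors, and that the pairing $(b^-\eta_{\lad+\zeta}, (b')^-\eta_{\lad+\zeta})$ of canonical basis vectors in $L(\lad+\zeta)$ differs from the pairing $(b^-, (b')^-)$ in $\U^-$ by a correction lying in $\Z[q^{-1}]$ that goes to $0$ as $\lad\mapsto\infty$ (this is the finite-type analogue of \cite[26.2.3]{Lu93}, which the excerpt already invokes). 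Combined with the fact that each coefficient in \eqref{eq:s1} is a finite sum of the form $f_{\vec i}(q)\prod_a q_{i_a}^{-2\langle i_a,\lad\rangle}$, which converges in $\Q((q^{-1}))$ as $\lad\mapsto\infty$, one gets that $(z_\lad,z'_\lad)$ is, for $\lad$ far enough in every coordinate direction, a fixed element of $\Q((q^{-1}))$ plus an explicitly vanishing tail; hence it converges. This proves the first assertion of the Lemma and, as a byproduct, one direction of the equivalences.

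For the equivalences I would run the cycle (a)$\Rightarrow$(d)$\Rightarrow$(b)$\Rightarrow$(c)$\Rightarrow$(a), or a convenient variant. The implication (a)$\Rightarrow$(d) is immediate from the convergence analysis above: if the coefficients of $z_\lad$ are all of the form \eqref{eq:s0} (each a sum over $s\ge 1$), then every term in the expansion of $(z_\lad,z'_\lad)$ carries at least one factor $q_{i}^{-2\langle i,\lad\rangle}$, which forces the limit to be $0$. The implications (d)$\Rightarrow$(b) and (d)$\Rightarrow$(c) are trivial, since $\{u(\xi_{-\lad}\otimes\eta_{\lad+\zeta})\}$ and $\{x\xi_{-\lad}\otimes y\eta_{\lad+\zeta}\}$ are themselves bounded standard sequences. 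The substantive step is (c)$\Rightarrow$(a) (equivalently (b)$\Rightarrow$(a)): here one uses that the bilinear form on ${}^\omega L(\lad)\otimes L(\lad+\zeta)$ is nondegenerate on the relevant graded pieces for $\lad$ large. Write $z_\lad$ in the basis of elements $x\xi_{-\lad}\otimes y\eta_{\lad+\zeta}$ with $(x,y)$ ranging over pairs of canonical basis elements from a fixed finite set; the coefficients are a priori of the form \eqref{eq:s1}. Pairing against each $x'\xi_{-\lad}\otimes y'\eta_{\lad+\zeta}$ and using the near-orthogonality of canonical bases (the Gram matrix is unital upper-triangular over $\Z[q^{-1}]$, with off-diagonal and higher corrections vanishing as $\lad\mapsto\infty$), one can solve triangularly for the coefficients of $z_\lad$ in terms of the pairings $(z_\lad, x'\xi_{-\lad}\otimes y'\eta_{\lad+\zeta})$. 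If all these pairings converge to $0$, then each coefficient of $z_\lad$ converges to $0$; but a coefficient of the form \eqref{eq:s1} converges to $0$ in $\Q((q^{-1}))$ if and only if its ``constant part'' (the $s=0$ term $f_\emptyset(q)$, collected appropriately) vanishes, i.e.\ the coefficient is actually of the form \eqref{eq:s0}. This shows $\{z_\lad\}$ is asymptotically zero, giving (a).

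The main obstacle is the last step, (c)$\Rightarrow$(a): it requires being careful that ``the coefficient converges to $0$'' genuinely forces it into the shape \eqref{eq:s0} rather than merely into something vanishing in the limit — one must argue at the level of the algebra $\Q(q)[q_i^{-2\langle i,\lad\rangle} : i\in\I]$, noting that distinct monomials $\prod_a q_{i_a}^{-2\langle i_a,\lad\rangle}$ remain ``separated'' as $\lad\mapsto\infty$ (their valuations in $q^{-1}$ tend to distinct limits, or one can specialize along well-chosen rays in $X^+$), so that the only way a finite $\Q(q)$-combination of them tends to $0$ is for the coefficient of the empty monomial to be $0$. A second, more technical point is that the expansion of $z_\lad$ in \eqref{eq:spanning2} need not a priori have its pairs $(x,y)$ equal to canonical basis elements; one first rewrites $z_\lad$ in terms of such a basis (possible since $z_\lad$ lies in a fixed finite-dimensional graded piece for $\lad$ large, independent of $\lad$ after the identification of graded pieces), and checks that the passage from one form of standard sequence to another preserves boundedness — but this is exactly the content already extracted from Lemmas~\ref{lem:yEi}--\ref{lem:stable} and Proposition~\ref{prop:approx}, so it is routine.
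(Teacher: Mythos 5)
Your proposal is correct and follows essentially the same route as the paper: establish convergence of the pairings via Lemma~\ref{lem:stable}, the factorization of the form, and the limit formula \cite[26.2.3]{Lu93}; get (a)$\Rightarrow$(d) from the vanishing of every coefficient of the form \eqref{eq:s0}; pass between the spanning sets \eqref{eq:spanning} and \eqref{eq:spanning2} via Proposition~\ref{prop:approx}; and close the hard implication back to (a) by testing against a (near-)orthogonal basis, noting that a coefficient of the form \eqref{eq:s1} tends to $0$ in $\Q((q^{-1}))$ only if its $s=0$ part vanishes. The only cosmetic difference is that the paper runs the last step as (b)$\Rightarrow$(a) using an orthogonal decomposition of $z_\lad$ in terms of elements $u(\xi_{-\lad}\otimes\eta_{\lad+\zeta})$, whereas you run (c)$\Rightarrow$(a) via a triangular solve against pure tensors of canonical basis vectors; both rest on the same inputs.
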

\noindent  If one of the conditions (a)--(d) above is satisfied, we say $\lim_{\lad \mapsto \infty}\limits z_\lad =0$.

\begin{proof}
Let us prove that $(z_\lad, u(\xi_{-\lad} \otimes \eta_{\lad+\zeta})) \in \Q(q)$ converges in $\Q((q^{-1}))$ as $\lad $ tends to $\infty$. Note $(z_\lad, u(\xi_{-\lad} \otimes \eta_{\lad+\zeta})) = (\rho(u) z_\lad, \xi_{-\lad} \otimes \eta_{\lad+\zeta})$, and $\{\rho(u)z_\lad\}$ is a bounded standard sequence spanned by elements of the form \eqref{eq:spanning}--\eqref{eq:spanning2} with coefficients $f_\lad(q)$ as in \eqref{eq:s1}. If $f_\lad(q) u'(\xi_{-\lad} \otimes \eta_{\lad+\zeta})$ is a summand of $\rho(u)z_\lad$, then $(f_\lad(q) u'(\xi_{-\lad} \otimes \eta_{\lad+\zeta}), \xi_{-\lad} \otimes \eta_{\lad+\zeta})$ converges in $\Q((q^{-1}))$ to $\lim\limits_{\lad \mapsto \infty} f_\lad(q) \cdot (u', \one_\zeta)$, by \cite[26.2.3]{Lu93}. 
If $f_\lad(q) x'\xi_{-\lad} \otimes y'\eta_{\lad+\zeta}$ is a summand of $\rho(u)z_\lad$, then 
$(f_\lad(q) x'\xi_{-\lad} \otimes y'\eta_{\lad+\zeta}, \xi_{-\lad} \otimes \eta_{\lad+\zeta})$ converges in $\Q((q^{-1}))$ to 
$\lim\limits_{\lad \mapsto \infty} f_\lad(q) \cdot (x',1)(y',1)$. Summarizing, $(z_\lad, u(\xi_{-\lad} \otimes \eta_{\lad+\zeta})) =(\rho(u) z_\lad, \xi_{-\lad} \otimes \eta_{\lad+\zeta})$ converges in $\Q((q^{-1}))$. 

Assume (a) holds. Then (b) and (c) follow by the same arguments above together with $\lim\limits_{\lad \mapsto \infty} f_\lad(q)=0$, and subsequently (d) also follows. Therefore, for any bounded standard sequences $\{z_\lad\}_{\lad \in X^+}, \{z'_\lad\}_{\lad \in X^+}$, we have 
\begin{align} 
  \label{eq:zero}
\lim\limits_{\lad \mapsto \infty}(z_\lad, z_\lad') =0, \quad 
\text{ if either sequence is }  o(1).
\end{align}

Now by Proposition~\ref{prop:approx}(1), $x \xi_{-\lad} \otimes y \eta_{\lad+\zeta} = (x\fus_\zeta y)
(\xi_{-\lad} \otimes \eta_{\lad+\zeta}) +o(1)$, and we have already shown that $\lim\limits_{\lad \mapsto \infty}(z_\lad, (x\fus_\zeta y) (\xi_{-\lad} \otimes \eta_{\lad+\zeta}))$ exists. Thus, by applying \eqref{eq:zero}, $\lim\limits_{\lad \mapsto \infty}(z_\lad, x \xi_{-\lad} \otimes y\eta_{\lad+\zeta}) = \lim\limits_{\lad \mapsto \infty}(z_\lad, (x\fus_\zeta y)
(\xi_{-\lad} \otimes \eta_{\lad+\zeta}))$ exists in $\Q((q^{-1}))$. 

The equivalence of (b) and (c) follows by \eqref{eq:zero} and Proposition~\ref{prop:approx}. Clearly Parts (b) and (c) are special cases of (d), and on the other hand,  Part (d) follows from (b)--(c) combined easily. 

It remains to show that (b) $\Rightarrow$ (a). By Proposition~\ref{prop:approx} and \eqref{eq:zero}, we can assume that $z_\lad$ is spanned by elements of the form \eqref{eq:spanning2}, i.e., $z_\lad =\sum_{u} f_{\lad,u} (q) u (\xi_{-\lad} \otimes \eta_{\lad+\zeta})$, for various nonzero $u$ which are orthogonal to each other. Then, for each such $u$, $(z_\lad, u(\xi_{-\lad} \otimes \eta_{\lad+\zeta}))$ converges  in $\Q((q^{-1}))$ to $\lim\limits_{\lad \mapsto \infty} f_{\lad,u} (q) \cdot (u,u)$, by \cite[26.2.3]{Lu93}. By the assumption (b), $\lim\limits_{\lad \mapsto \infty} f_{\lad,u} (q)=0$, so $f_{\lad,u} (q)$ is of the form \eqref{eq:s0} and $z_\lad$ is asymptotically zero. 

The lemma is proved. 
\end{proof}

We have the following reformulation of Proposition~\ref{prop:approx}. Given $x,y \in \bff$, we have $x^+ \in \U^+, y^- \in \U^-$, and we shall simply write $x \fus_\zeta y$ for $x^+ \fus_\zeta y^-$. 

\begin{prop}
  \label{prop:limit}
  Let $\zeta \in X$.
\begin{enumerate}
\item
Given $x, y \in \bff$, there exists a unique element $x \fus_\zeta y \in \Udot \one_\zeta$ such that 
\begin{align*}
\lim_{\lad \mapsto \infty}\limits  \Big((x \fus_\zeta y) (\xi_{-\lad} \otimes \eta_{\lad+\zeta} ) - x^+ \xi_{-\lad} \otimes y^- \eta_{\lad+\zeta} \Big ) =0.
\end{align*}

\item
Given $u \in \Udot \one_\zeta$, there exists a unique element $u''=\sum_k x_{k} \otimes y_{k} \in \U^+ \otimes \U^-$ such that 
\begin{align*}
\lim_{\lad \mapsto \infty}\limits  \Big(u (\xi_{-\lad} \otimes \eta_{\lad+\zeta} ) - \sum_{k} x_{k} \xi_{-\lad} \otimes y_{k} \eta_{\lad+\zeta} \Big ) =0.
\end{align*}
\end{enumerate}
\end{prop}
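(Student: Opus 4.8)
The plan is to deduce Proposition~\ref{prop:limit} directly from Proposition~\ref{prop:approx} by verifying that the two notions of ``approximation'' used there coincide. Recall that Proposition~\ref{prop:approx} was phrased with the $o(1)$-notation, i.e.\ in terms of the explicit coefficient shape \eqref{eq:s0}, whereas Proposition~\ref{prop:limit} is phrased in terms of $\lim_{\lad\mapsto\infty} = 0$ in the sense defined just after Lemma~\ref{lem:lim0}. Lemma~\ref{lem:lim0} provides precisely the bridge: for a bounded standard sequence, condition (a) (``asymptotically zero'', i.e.\ $z_\lad = o(1)$) is equivalent to conditions (b)--(d), and any of these is what ``$\lim_{\lad\mapsto\infty} z_\lad = 0$'' means. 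So the first step is to observe that both sequences appearing inside the limits in Proposition~\ref{prop:limit} are bounded standard sequences, so that Lemma~\ref{lem:lim0} applies to them.

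For part (1): given $x,y\in\bff$, write $x = \sum_\nu x_\nu$ and $y = \sum_\nu y_\nu$ as sums of homogeneous components; by linearity of the defining property it suffices to treat $x,y$ homogeneous, and then apply Claim~1 inside the proof of Proposition~\ref{prop:approx}(1) to get an element $x\fus_\zeta y \in \Udot\one_\zeta$ with $(x\fus_\zeta y)(\xi_{-\lad}\otimes\eta_{\lad+\zeta}) - x^+\xi_{-\lad}\otimes y^-\eta_{\lad+\zeta} = o(1)$. The sequence $z_\lad := (x\fus_\zeta y)(\xi_{-\lad}\otimes\eta_{\lad+\zeta}) - x^+\xi_{-\lad}\otimes y^-\eta_{\lad+\zeta}$ is a bounded standard sequence (it is a difference of sequences of the forms \eqref{eq:spanning} and \eqref{eq:spanning2}), and being $o(1)$ it is asymptotically zero, so $\lim_{\lad\mapsto\infty} z_\lad = 0$ by the definition following Lemma~\ref{lem:lim0}. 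For uniqueness, suppose $w\in\Udot\one_\zeta$ satisfies the same limit condition; then $w - x\fus_\zeta y$ annihilates $\xi_{-\lad}\otimes\eta_{\lad+\zeta}$ up to a sequence $z_\lad$ with $\lim_{\lad\mapsto\infty} z_\lad = 0$, hence, invoking Lemma~\ref{lem:lim0}(d) (or the $(z_\lad,z_\lad)\to 0$ argument together with \cite[26.2.3]{Lu93}, exactly as in the proof of Proposition~\ref{prop:approx}(1)), one gets $(w - x\fus_\zeta y, w - x\fus_\zeta y) = 0$ and therefore $w = x\fus_\zeta y$ by nondegeneracy of the bilinear form on $\Udot\one_\zeta$.

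Part (2) is the mirror image: given $u\in\Udot\one_\zeta$, decompose $u = \sum xy\one_\zeta$ as a sum over pairs of homogeneous $x\in\U^+$, $y\in\U^-$ (using $\Udot\one_\zeta = \U^+\U^-\one_\zeta$), apply Claim~2 from the proof of Proposition~\ref{prop:approx}(2) to obtain $u'' = \sum_k x_k\otimes y_k \in \U^+\otimes\U^-$ with $u(\xi_{-\lad}\otimes\eta_{\lad+\zeta}) - \sum_k x_k\xi_{-\lad}\otimes y_k\eta_{\lad+\zeta} = o(1)$, and again translate ``$=o(1)$'' into ``$\lim_{\lad\mapsto\infty}\,\cdot = 0$'' via the definition after Lemma~\ref{lem:lim0}. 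For uniqueness one checks that if $\sum_k x_k\otimes y_k$ and $\sum_l x'_l\otimes y'_l$ both work, then $\{\sum_k x_k\xi_{-\lad}\otimes y_k\eta_{\lad+\zeta} - \sum_l x'_l\xi_{-\lad}\otimes y'_l\eta_{\lad+\zeta}\}$ is a bounded standard sequence with limit $0$, so by Lemma~\ref{lem:lim0}(c) its pairing with every $x\xi_{-\lad}\otimes y\eta_{\lad+\zeta}$ tends to $0$; letting $\lad\mapsto\infty$ and using \cite[26.2.3]{Lu93} gives $\sum_k (x_k, x)(y_k, y) = \sum_l (x'_l, x)(y'_l, y)$ for all $x,y$, and nondegeneracy of the form on $\bff$ forces $\sum_k x_k\otimes y_k = \sum_l x'_l\otimes y'_l$.

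The one point that needs a sentence of care — and the only place where anything is actually being proved rather than unpacked — is verifying that the sequences in question are genuinely \emph{bounded} standard sequences, so that Lemma~\ref{lem:lim0} is applicable and the phrase ``$\lim_{\lad\mapsto\infty} = 0$'' is even defined for them; this is immediate from the fact that elements of the forms \eqref{eq:spanning}, \eqref{eq:spanning2} built from $\fus_\zeta$-products and from canonical-basis elements, together with the coefficients produced in Claims~1 and~2 (which by construction have the shape \eqref{eq:s1}), satisfy Definition~\ref{def:seq}(1). Everything else is a restatement of Proposition~\ref{prop:approx} in the $\lim$-language, so I do not expect any genuine obstacle; the proof will essentially read ``this is Proposition~\ref{prop:approx} combined with Lemma~\ref{lem:lim0} and the definition of $\lim_{\lad\mapsto\infty}$.''
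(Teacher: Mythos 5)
Your proposal is correct and matches the paper's intent exactly: the paper offers no separate proof, presenting the proposition as a "reformulation of Proposition~\ref{prop:approx}", with Lemma~\ref{lem:lim0} supplying precisely the equivalence between the $o(1)$ condition and the $\lim_{\lad\mapsto\infty}=0$ condition for bounded standard sequences that you spell out. Your additional care in checking that the relevant sequences are bounded standard sequences (so that the limit language is even defined) and in transporting the uniqueness arguments is a faithful unpacking of what the paper leaves implicit.
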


\subsection{A linear isomorphism for $\Udot \one_\zeta$}

We now present the first main result of this paper, which is built on the constructions in Proposition~\ref{prop:approx} or its reformulation in Proposition~\ref{prop:limit}; we retain the notation therein in Part (1) of the theorem below.

\begin{thm}  
  \label{thm:obasis}
{\quad}
\begin{enumerate}
\item
 For any $\zeta \in X$, there exists a $\Q(q)$-linear isomorphism 
$$
\mathcal F_{\zeta} \colon \U^+ \otimes \U^- \longrightarrow \Udot \one_\zeta
$$ 
such that 
\[
\mathcal F_\zeta  (x \otimes y ) =x \fus_\zeta y,
\qquad \mathcal F_\zeta ^{-1} (u) =u''.
\] 
\item
Given bases $B_1, B_2$ for $\bff$, the set 
\begin{align}
  \label{eq:PBW}
B_1 \fus B_2 := \big \{ b_1\fus_\zeta b_2 \mid b_1 \in B_1, b_2 \in B_2, \zeta \in X \big \}
\end{align}
forms a $\Q(q)$-basis for $\Udot$. Moreover, for $b_1, b_1' \in B_1, b_2, b_2' \in B_2$ and $\zeta, \zeta' \in X$, we have
\begin{align}  \label{eq:BF2}
\big(b_1 \fus_\zeta b_2, \, b'_1 \fus_{\zeta'} b'_2 \big) = \delta_{\zeta, \zeta'} (b_1,b'_1) \, (b_2,b'_2).
\end{align}
\end{enumerate}
\end{thm}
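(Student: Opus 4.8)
The plan is to prove Theorem~\ref{thm:obasis} in three stages: first the isomorphism, then the basis property, then the orthogonality formula \eqref{eq:BF2}.

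\textbf{Step 1: $\mathcal F_\zeta$ is a well-defined linear bijection.} By Proposition~\ref{prop:approx}(1) (equivalently Proposition~\ref{prop:limit}(1)), the assignment $x \otimes y \mapsto x \fus_\zeta y$ is well-defined on pure tensors; since $x \fus_\zeta y$ is uniquely characterized by the approximation condition \eqref{starxy}, bilinearity in $(x,y)$ follows from the uniqueness clause: if $x = x_1 + x_2$, then $(x_1 \fus_\zeta y + x_2 \fus_\zeta y)(\xi_{-\lad} \otimes \eta_{\lad+\zeta})$ approximates $x\xi_{-\lad}\otimes y\eta_{\lad+\zeta}$, so it must equal $(x\fus_\zeta y)(\xi_{-\lad}\otimes\eta_{\lad+\zeta})$; similarly in $y$. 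Hence $\mathcal F_\zeta$ extends to a $\Q(q)$-linear map on $\U^+ \otimes \U^-$. For the inverse, Proposition~\ref{prop:approx}(2) gives for each $u \in \Udot\one_\zeta$ a unique $u'' = \sum_k x_k \otimes y_k$; I would check the map $u \mapsto u''$ is linear (again by uniqueness) and verify $\mathcal F_\zeta(u'') = u$ and $(\mathcal F_\zeta(x\otimes y))'' = x \otimes y$ by comparing both sides against the same approximating sequence and invoking uniqueness once more. This shows $\mathcal F_\zeta$ is a linear isomorphism.

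\textbf{Step 2: $B_1 \fus B_2$ is a basis.} Since $B_1, B_2$ are bases of $\bff \cong \U^\pm$, the set $\{b_1^+ \otimes b_2^- \mid b_1 \in B_1, b_2 \in B_2\}$ is a basis of $\U^+ \otimes \U^-$, so by Step 1 its image $\{b_1 \fus_\zeta b_2\}$ is a basis of $\Udot\one_\zeta$. Because $\Udot = \bigoplus_{\zeta \in X} \Udot\one_\zeta$, taking the union over all $\zeta$ yields a basis of $\Udot$.

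\textbf{Step 3: the orthogonality formula \eqref{eq:BF2}.} The $\delta_{\zeta,\zeta'}$ factor is immediate from \eqref{eq:ad1} since $b_1 \fus_\zeta b_2 \in \Udot\one_\zeta$. For fixed $\zeta = \zeta'$, the key identity is
\[
(b_1 \fus_\zeta b_2, \, b_1' \fus_\zeta b_2') = \lim_{\lad \mapsto \infty} \big( (b_1 \fus_\zeta b_2)(\xi_{-\lad}\otimes\eta_{\lad+\zeta}), \; (b_1' \fus_\zeta b_2')(\xi_{-\lad}\otimes\eta_{\lad+\zeta}) \big),
\]
which holds because $(z_\lad, z_\lad')$ converges to the value of the $\Udot$-form on the limits (this is the content of \cite[26.2.3]{Lu93} applied to each pair of spanning terms, exactly as used in the proof of Lemma~\ref{lem:lim0}). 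Now apply the approximation \eqref{starxy} to each factor: writing $z_\lad := (b_1\fus_\zeta b_2)(\xi_{-\lad}\otimes\eta_{\lad+\zeta}) = b_1^+\xi_{-\lad}\otimes b_2^-\eta_{\lad+\zeta} + o(1)$ and similarly $z_\lad'$, and using Lemma~\ref{lem:lim0} (specifically \eqref{eq:zero}, that an $o(1)$ factor kills the limit of the pairing), we get
\[
(b_1 \fus_\zeta b_2, \, b_1' \fus_\zeta b_2') = \lim_{\lad\mapsto\infty}\big(b_1^+\xi_{-\lad}\otimes b_2^-\eta_{\lad+\zeta}, \; (b_1')^+\xi_{-\lad}\otimes (b_2')^-\eta_{\lad+\zeta}\big).
\]
The bilinear form on ${}^\omega L(\lad)\otimes L(\lad+\zeta)$ factors as a product, so this equals $\lim_{\lad\mapsto\infty} (b_1^+\xi_{-\lad}, (b_1')^+\xi_{-\lad}) \cdot (b_2^-\eta_{\lad+\zeta}, (b_2')^-\eta_{\lad+\zeta})$. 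By \cite[26.2.3]{Lu93} (or the contravariant-form computation on highest/lowest weight modules), $\lim_{\lad\mapsto\infty}(b_2^-\eta_{\lad+\zeta}, (b_2')^-\eta_{\lad+\zeta}) = (b_2^-, (b_2')^-) = (b_2, b_2')$ using \eqref{eq:ad3}, and similarly the lowest-weight factor converges to $(b_1, b_1')$. Multiplying gives \eqref{eq:BF2}.

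\textbf{Main obstacle.} The delicate point is justifying the interchange of limit and bilinear form, i.e., that $(z_\lad, z_\lad') \to (\text{limit of }z_\lad, \text{limit of }z_\lad')$ with respect to the $\Udot$-form, and that $o(1)$ terms genuinely contribute nothing in the limit. This is not a formal consequence of the definitions but rests on the stabilization result \cite[26.2.3]{Lu93} together with the structure of bounded/asymptotically-zero standard sequences; fortunately this has already been packaged in Lemma~\ref{lem:lim0}, in particular \eqref{eq:zero}, so the proof reduces to carefully assembling those statements. A secondary care point is confirming that the factored form $(x\otimes y, x'\otimes y') = (x,x')(y,y')$ on the tensor-product module, combined with the convergence of each contravariant-form factor to the corresponding form on $\bff$, is applied with the correct identifications ${}^\omega L(\lad)$ versus $L(\lad)$ and the right weight shifts.
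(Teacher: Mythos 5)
Your proposal is correct and follows essentially the same route as the paper: Proposition~\ref{prop:approx}/\ref{prop:limit} plus their uniqueness clauses give mutually inverse linear maps, the basis statement is formal, and \eqref{eq:BF2} is obtained by passing the pairing on ${}^\omega L(\lad)\otimes L(\lad+\zeta)$ to the limit via \cite[26.2.3]{Lu93} and discarding the $o(1)$ corrections using Lemma~\ref{lem:lim0}. Your explicit check that $x\fus_\zeta y$ is bilinear in $(x,y)$ (so that $\mathcal F_\zeta$ descends to the tensor product) is a point the paper only remarks on later, and is a welcome addition.
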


\begin{proof}
(1) 
Write $\mathcal F =\mathcal F_\zeta$, and denote $\mathcal G:  \Udot \one_\zeta   \longrightarrow \U^+ \otimes \U^-$ the linear map such that $\mathcal G(u) =u''$ as given in Proposition~\ref{prop:approx}(2) or Proposition~\ref{prop:limit}(2). It follows by the uniqueness in Proposition~\ref{prop:limit}(2) that $\mathcal G \mathcal F (x\otimes y)  =\mathcal G(x \fus_\zeta y) =x \otimes y$. 
Let $u \in \Udot \one_\zeta$ and $\mathcal G (u) =u'' :=\sum_k x_k \otimes y_k$ as in Proposition~\ref{prop:limit}(2). Then by Proposition~\ref{prop:limit}(1), we have $(\sum_k x_k \fus_\zeta y_k)  (\xi_{-\lad} \otimes  \eta_{\lad+\zeta} )
 -\sum_{k} x_k \xi_{-\lad} \otimes y_k \eta_{\lad+\zeta} = o(1).$ By the uniqueness in Proposition~\ref{prop:limit}(1), we must have $u= \sum_k x_k \fus_\zeta y_k$, i.e., $u=\mathcal F \mathcal G(u)$.
So $\mathcal F$ is a linear isomorphism with inverse $\mathcal G$.

(2)
The first statement on bases follows by (1). 
The formula \eqref{eq:BF2} is trivial if $\zeta' \neq \zeta$.  

Assume $\zeta'=\zeta$. 
Set 
\begin{align*}
z_\lad &:= (b_1 \fus_{\zeta} b_2) (\xi_{-\lad} \otimes \eta_{\lad+\zeta} ) - b_1 \xi_{-\lad} \otimes b_2 \eta_{\lad+\zeta},
\\
z'_\lad &:=  (b'_1 \fus_{\zeta} b'_2) (\xi_{-\lad} \otimes \eta_{\lad+\zeta} ) - b'_1 \xi_{-\lad} \otimes b'_2 \eta_{\lad+\zeta}. 
\end{align*}
It follows by Proposition~\ref{prop:limit} that  $\lim_{\lad \mapsto \infty}\limits z_\lad =0$ and $\lim_{\lad \mapsto \infty}\limits z'_\lad =0$. Then we have 
\begin{align*}
&\Big((b_1 \fus_{\zeta} b_2) (\xi_{-\lad} \otimes \eta_{\lad+\zeta} ), (b'_1 \fus_{\zeta} b'_2)  (\xi_{-\lad} \otimes \eta_{\lad+\zeta} ) 
\Big)
 - \big( b_1 \xi_{-\lad} \otimes b_2 \eta_{\lad+\zeta},
b'_1 \xi_{-\lad} \otimes b'_2 \eta_{\lad+\zeta} \big )
\\
&= \big(z_\lad, (b'_1 \fus_{\zeta} b'_2) (\xi_{-\lad} \otimes \eta_{\lad+\zeta} ) \big )
 + \big(z'_\lad, b_1 \xi_{-\lad} \otimes b_2 \eta_{\lad+\zeta} \big ),
\end{align*}
whose RHS converges in $\Q((q^{-1}))$ to $0$ as $\lad$ tends to $\infty$ by Lemma~\ref{lem:lim0}. 

By  \cite[26.2.3]{Lu93}, the pairing $\big((b_1 \fus_{\zeta} b_2) (\xi_{-\lad} \otimes \eta_{\lad+\zeta} ), (b'_1 \fus_{\zeta} b'_2)  (\xi_{-\lad} \otimes \eta_{\lad+\zeta} ) \big)$ converges in $\Q((q^{-1}))$ to $\big( b_1 \fus_{\zeta} b_2, b'_1 \fus_{\zeta} b'_2 \big)$ as $\lad$ tends to $\infty$. On the other hand, we have
\[
\big( b_1 \xi_{-\lad} \otimes b_2 \eta_{\lad+\zeta},
b'_1 \xi_{-\lad} \otimes b'_2 \eta_{\lad+\zeta} \big )
 =\big( b_1 \xi_{-\lad},
b'_1 \xi_{-\lad} \big ) \big(b_2 \eta_{\lad+\zeta}, b'_2 \eta_{\lad+\zeta} \big ),
\]
which converges to $(b_1, b'_1)(b_2, b'_2)$ as $\lad$ tends to $\infty$. Hence \eqref{eq:BF2} for $\zeta'=\zeta$ follows. 

The theorem is proved. 
\end{proof}
\subsection{Fused canonical basis for $\Udot$}

We have several distinguished choices of bases for $\bff$. If we choose the canonical basis $\B$ for $\bff$, the $\Q(q)$-basis 
\[
\B \fus \B = \big \{ b_1 \fus_\zeta b_2  \mid b_1, b_2\in \B, \zeta \in X \big \}
\]
is called a {\em fused canonical basis} for $\Udot$. Note that
\[
b^+ \one_\zeta \in \B \fus \B, \;
 b^- \one_\zeta \in \B \fus \B, 
\quad \text{ for } b \in \B. 
\]

Recall Lusztig's canonical basis $\{b_1 \diamondsuit_\zeta b_2 | b_1, b_2 \in \B, \zeta \in X \}$ on $\Udot$. 
Define a partial order $\leq$ on $\B \times \B$ as follows \cite[24.3.1]{Lu93}. We say $(b_1', b_2') \leq (b_1, b_2)$ if  $|b'_1| -|b'_2| =|b_1| -|b_2|$ and if either ($b_1 =b_1'$ and $b_2 =b_2'$) or 
\[
\hgt|b'_1| <\hgt|b_1| \text{ and } 
\hgt|b'_2|<\hgt|b_2|.
\]

\begin{thm}
  \label{thm:CB-fCB}
Let $\zeta \in X$, and $b_1, b_2 \in \B$. We have
\begin{align*}
b_1 \diamondsuit_\zeta b_2 
= b_1 \fus_\zeta b_2  
+ \sum_{(b'_1, b'_2) <(b_1, b_2)}  P_{(b'_1, b'_2),(b_1, b_2)}  (q) \, 
b'_1 \fus_\zeta b'_2
 \end{align*}
where $P_{(b'_1, b'_2),(b_1, b_2)} (q)   \in \Z [[q^{-1}]] \cap \Q(q)$.
Moreover, for the ADE type, $P_{(b_1', b_2'),(b_1, b_2)}^{\lad} (q)$ lies in $ \N[[q^{-1}]] \cap \Q(q)$. 
\end{thm}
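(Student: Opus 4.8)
The plan is to prove the triangularity statement of Theorem~\ref{thm:CB-fCB} by transporting the known triangularity between Lusztig's canonical basis of $\Udot$ and a suitable ``standard'' (PBW-like) basis through the isomorphism $\mathcal F_\zeta$ of Theorem~\ref{thm:obasis}, and then establishing the positivity of the coefficients by a separate, geometric/categorical argument in the ADE case. For the first (triangularity) part, I would start from Lusztig's construction of $b_1 \diamondsuit_\zeta b_2$ in \cite[\S24--25]{Lu93}, where $b_1 \diamondsuit_\zeta b_2$ is characterized by being $\bar{\ }$-invariant and congruent to a monomial $b_1^+ b_2^- \one_\zeta$ modulo $\sum_{(b_1',b_2')<(b_1,b_2)} q^{-1}\Z[q^{-1}]\, b_1'^+ b_2'^- \one_\zeta$. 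The key input is Claim~1 inside the proof of Proposition~\ref{prop:approx}: $x \fus_\zeta y \in xy\one_\zeta + P(\hgt|x|-1,\hgt|y|-1)$, so applied to $x=b_1^+$, $y=b_2^-$ we get that $b_1 \fus_\zeta b_2$ and $b_1^+ b_2^- \one_\zeta$ differ by an element of $P(\hgt|b_1|-1,\hgt|b_2|-1)$, i.e. a combination of $b_1'^+ b_2'^- \one_\zeta$ with $\hgt|b_1'|<\hgt|b_1|$, $\hgt|b_2'|<\hgt|b_2|$, $|b_1'|-|b_2'|=|b_1|-|b_2|=\zeta$ — exactly the strictly-lower elements in the order $\leq$. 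Iterating this (the transition matrix between $\{b_1 \fus_\zeta b_2\}$ and $\{b_1^+ b_2^- \one_\zeta\}$ is unitriangular for $\leq$), one rewrites Lusztig's expansion of $b_1 \diamondsuit_\zeta b_2$ in the $\fus$-basis and gets the stated unitriangular form with some coefficients $P_{(b_1',b_2'),(b_1,b_2)}(q) \in \Q(q)$.

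For the claim $P_{(b_1',b_2'),(b_1,b_2)}(q) \in \Z[[q^{-1}]] \cap \Q(q)$, I would argue that $b_1 \fus_\zeta b_2$ itself has a triangular expansion in terms of $b_1'^+ b_2'^- \one_\zeta$ (hence in terms of Lusztig's $b_1' \diamondsuit_\zeta b_2'$) with coefficients in $\Q(q)$ that converge in $\Q((q^{-1}))$: this is visible from tracking the recursion defining $x \fus_\zeta y$ in the proof of Proposition~\ref{prop:approx}(1), where the coefficient modifications introduced at each step are the $f_{\vec i}(q)$-type coefficients of Definition~\ref{def:seq}, which by construction lie in $\Q(q) \cap \Q((q^{-1}))$ — more precisely, limits as $\lad \mapsto \infty$ of expressions $\sum f_{\vec i}(q)\prod q_{i_a}^{-2\langle i_a,\lad\rangle}$, and these limits lie in $\Z[[q^{-1}]] \cap \Q(q)$ once one checks the $f_{\vec i}(q)$ are obtained from Lusztig's integral structure constants (products of $q_i$-integers) divided by factors $(q_i^{-2};q_i^{-2})_m$ appearing in \eqref{BFhalf}; compare the explicit rank-one computation. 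Inverting a unitriangular matrix over $\Z[[q^{-1}]]$ stays in $\Z[[q^{-1}]]$, and combining with Lusztig's coefficients $P_{(b_1',b_2'),(b_1,b_2)}^{\text{Lu}}(q) \in q^{-1}\Z[q^{-1}]$ (off-diagonal) gives the integrality.

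For the ADE positivity refinement, I would invoke the two positivity inputs flagged in the introduction. First, by Webster \cite[Corollary~8.9]{We15}, in finite type each Lusztig canonical basis element of $\Udot$ (equivalently, each canonical basis element of ${}^\omega L(\lad)\otimes L(\lad+\zeta)$ for $\lad \gg 0$) expands in the pure tensors $b_1^+ \xi_{-\lad} \otimes b_2^- \eta_{\lad+\zeta}$ with coefficients in $\N[[q^{-1}]] \cap \Q(q)$; passing to the limit $\lad \mapsto \infty$ via Proposition~\ref{prop:limit} and the definition of the fused canonical basis, this says $b_1 \diamondsuit_\zeta b_2$ expands in $\B \fus \B = \{b_1' \fus_\zeta b_2'\}$ with coefficients in $\N[[q^{-1}]] \cap \Q(q)$ — which is precisely the asserted positivity of $P_{(b_1',b_2'),(b_1,b_2)}(q)$. (The second input, that the canonical basis of $\U^+$ is PBW-positive \cite{Ka14,BKM14,Oy18}, is what one needs to then further pass from the fused canonical basis to the PBW basis; for the present statement about $P$, only Webster's result is required.) I expect the main obstacle to be the bookkeeping in the second paragraph: rigorously tracking, through the inductive recursion of Proposition~\ref{prop:approx}, that the coefficients produced lie in $\Z[[q^{-1}]] \cap \Q(q)$ rather than merely in $\Q(q) \cap \Q((q^{-1}))$ — this requires identifying the structure constants with Lusztig's integral ones and controlling the denominators $(q_i^{-2};q_i^{-2})_m$, and it is the step where the ``canonical basis $\B$ lives in $\U^\pm_{\Z[q^{-1}]}$'' hypothesis must be used carefully, since the fused basis itself does not lie in the integral form of $\Udot$.
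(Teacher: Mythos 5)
Your first and third paragraphs are essentially fine: the unitriangularity over $\Q(q)$ obtained by composing Claim~1 of Proposition~\ref{prop:approx} (which gives $b_1\fus_\zeta b_2 \in b_1^+b_2^-\one_\zeta + P(\hgt|b_1|-1,\hgt|b_2|-1)$) with Lusztig's triangularity of $b_1\diamondsuit_\zeta b_2$ against the monomials $b_1'^+b_2'^-\one_\zeta$ is a legitimate route, and your ADE positivity argument (Webster's \cite[Corollary 8.9]{We15} on the tensor-product module, then pass to the limit) is exactly the paper's. The genuine gap is your second paragraph, i.e.\ the claim $P_{(b_1',b_2'),(b_1,b_2)}(q)\in\Z[[q^{-1}]]\cap\Q(q)$. ``Tracking the recursion'' of Proposition~\ref{prop:approx}(1) does not obviously produce $\Z[[q^{-1}]]$ coefficients: each inductive step left-multiplies by $E_i$ (whose structure constants on canonical bases involve positive powers of $q$), introduces denominators $q_i-q_i^{-1}$, and brings in the coefficients of ${}_i r(y)$ in the canonical basis; nothing in that bookkeeping rules out coefficients in $\Q(q)\cap\Q((q^{-1}))$ with unbounded positive $q$-powers or non-integral expansions. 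You correctly flag this as the main obstacle, but you do not close it, and as written the integrality statement is unproved.

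The fix is already in your own third paragraph: the same limit comparison that you use for positivity yields integrality, triangularity and unitality simultaneously, with no need for your recursion-tracking step. By \cite[25.2.1, 24.3.3]{Lu93}, for $\lad$ large, $(b_1\diamondsuit_\zeta b_2)(\xi_{-\lad}\otimes\eta_{\lad+\zeta})$ equals $b_1^+\xi_{-\lad}\otimes b_2^-\eta_{\lad+\zeta}$ plus a sum over $(b_1',b_2')<(b_1,b_2)$ of pure tensors with coefficients $\widetilde P^\lad_{(b_1',b_2'),(b_1,b_2)}(q)\in\Z[q^{-1}]$ (and in $\N[q^{-1}]$ in ADE type by Webster). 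On the other hand, writing $b_1\diamondsuit_\zeta b_2=\sum P_{(b_1',b_2'),(b_1,b_2)}(q)\,b_1'\fus_\zeta b_2'$ in the basis of Theorem~\ref{thm:obasis} and applying Proposition~\ref{prop:approx}(1) to each term expresses the same vector as $\sum P_{(b_1',b_2'),(b_1,b_2)}(q)\,(b_1'^+\xi_{-\lad}\otimes b_2'^-\eta_{\lad+\zeta})+o(1)$. Comparing coefficients of the linearly independent pure tensors (the $o(1)$ part contributes only coefficients of the asymptotically-zero form \eqref{eq:s0}) gives $\lim_{\lad\mapsto\infty}\widetilde P^\lad_{(b_1',b_2'),(b_1,b_2)}(q)=P_{(b_1',b_2'),(b_1,b_2)}(q)$ in $\Q((q^{-1}))$, and all assertions of the theorem follow at once. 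In short: replace your second paragraph by this comparison, and your proof becomes the paper's.
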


\begin{proof}
As $\{b_1' \fus_\zeta b_2' \mid b_1', b_2' \in \B\}$ is a basis for $\Udot \one_\zeta$ by Theorem~\ref{thm:obasis}, we have
\begin{align}
  \label{eq:P}
b_1 \diamondsuit_\zeta b_2 = \sum_{(b'_1, b'_2)} 
P_{(b'_1, b'_2),(b_1, b_2)}  (q) \, 
b'_1 \fus_\zeta b'_2
 \end{align}
where $P_{(b'_1, b'_2),(b_1, b_2)} (q)   \in \Q(q)$. 

There is a canonical basis $\{ (b_1 \diamondsuit b_2)_{\lad, \lad +\zeta} | b_1 \in \B(\lad), b_2 \in \B(\lad +\zeta) \}$ on ${}^\omega L(\lad) \otimes L(\lad +\zeta)$, for $\lad \in X^+$ and $\lad+\zeta \in X^+$, by \cite[24.3.3]{Lu93}. 
By \cite[25.2.1]{Lu93}, we have 
\begin{align} 
  \label{eq:CBCB}
(b_1 \diamondsuit_\zeta b_2) (\xi_{-\lad} \otimes \eta_{\lad +\zeta} )
= (b_1 \diamondsuit b_2)_{\lad, \lad +\zeta}, \text{ for } b_1 \in \B(\lad), b_2 \in \B(\lad +\zeta).
\end{align}

We shall assume $\lad$ is chosen to be large enough below such that $b_1 \in \B(\lad), b_2 \in \B(\lad +\zeta)$. 
 When combining \eqref{eq:CBCB} with \cite[24.3.3]{Lu93} on expansion of $(b_1 \diamondsuit b_2)_{\lad, \lad +\zeta}$ in terms of pure tensors of canonical bases on ${}^\omega L(\lad)$ and $L(\lad +\zeta)$, we have
\begin{align}
(b_1 \diamondsuit_\zeta b_2) (\xi_{-\lad} \otimes \eta_{\lad +\zeta} )
&= b_1^+ \xi_{-\lad} \otimes  b_2^- \eta_{\lad +\zeta}  
  \label{eq:Ptilde}\\
&\quad
+ \sum_{\stackrel{(b_1',b_2') \in \B(\lad) \times \B(\lad+\zeta)}{(b_1', b_2') < (b_1, b_2)} }
\widetilde P_{(b_1', b_2'),(b_1, b_2)}^{\lad} (q) \, 
b_1'^+ \xi_{-\lad} \otimes  b_2'^- \eta_{\lad +\zeta},
\notag
 \end{align}
 where $\widetilde P_{(b_1', b_2'),(b_1, b_2)}^{\lad} (q) \in \Z[q^{-1}]$, with the superscript indicating its dependence on $\lad$. We set $\widetilde P_{(b_1', b_2'),(b_1, b_2)}^{\lad} (q) =0$ if $(b_1', b_2') \not \leq (b_1, b_2)$. By \eqref{eq:P} and Proposition~\ref{prop:approx}, we have
\begin{align}
  \label{eq:P2}
(b_1 \diamondsuit_\zeta b_2)  (\xi_{-\lad} \otimes \eta_{\lad +\zeta} )
 = \sum_{(b'_1, b'_2)} 
P_{(b'_1, b'_2),(b_1, b_2)}  (q) \, 
(b'_1 \xi_{-\lad} \otimes b'_2 \eta_{\lad +\zeta} ) +o(1).
 \end{align}
Comparing \eqref{eq:Ptilde} and \eqref{eq:P2}, we conclude that 
\begin{align}
  \label{eq:P3}
\lim_{\lad \mapsto \infty}\limits  \widetilde P_{(b_1', b_2'),(b_1, b_2)}^{\lad} (q)
=P_{(b'_1, b'_2),(b_1, b_2)}  (q). 
\end{align}
Hence, we must have $P_{(b'_1, b'_2),(b_1, b_2)}(q) \in \Z[[q^{-1}]]$; moreover, $P_{(b'_1, b'_2),(b_1, b_2)}(q)=0$ unless $(b'_1, b'_2) \le (b_1, b_2)$, and  $P_{(b_1, b_2),(b_1, b_2)} (q)=1$. 

Now assume $\U$ is of ADE type. By \cite[Corollary 8.9]{We15}, $\widetilde P_{(b_1', b_2'),(b_1, b_2)}^{\lad} (q)$ in \eqref{eq:Ptilde} lies in $\N[q^{-1}]$, and then by \eqref{eq:P3}, we have $P_{(b_1', b_2'),(b_1, b_2)}^{\lad} (q) \in \N[[q^{-1}]] \cap \Q(q)$. 
\end{proof}

\section{PBW basis for $\Udot$}

In this section, we restrict ourselves to $\U$ of finite type; see however Remark~\ref{rem:affine}. We construct and study the PBW basis for $\Udot$ and its relation to the canonical basis. We present explicit formulas in rank 1.  

\subsection{PBW basis in finite type}

Let $\Phi^+$ be the set of positive roots and $W$ be the associated Weyl group for $\U$. 
Let $\preceq$ be a convex order on $\Phi^+$ associated to an (arbitrarily) fixed reduced expression of the longest element $w_0$ in the Weyl group $W$, and we denote the (decreasingly) ordered roots as $\Phi^+ =\{\beta_1, \ldots, \beta_N\}$, where $N=|\Phi^+|$. For each $\beta \in \Phi^+$, Lusztig \cite{Lu90, Lu93} defined a root vector $E_\beta$ using braid group action on $\U$, and its corresponding divided power $E_{\beta}^{(m)} =E_\beta^m / [m]_{q_\beta}!$, where $q_\beta =q_i$ if $\beta$ lies in the $W$-orbit of a simple root $\alpha_i$. 
We shall identify the set $\KP$ of Kostant partitions with $\N^{\Phi^+}$, thanks to the bijection given by ${\bf m} =(m_\beta)_{\beta \in \Phi^+} \in \N^{\Phi^+} \mapsto (\beta^{m_\beta})_{\beta \in \Phi^+}\in \KP$. 
 For each $\bf m \in \KP$, we define
\[
E_{{\bf m}} := \prod_{a=1}^N E_{\beta_a}^{(m_{\beta_a})}.
\]
Then $\bold{W}^+:=\{E_{\bf m} | {\bf m} \in \KP \}$ forms a PBW basis for $\U^+$, which is orthogonal with respect to the bilinear form $(\cdot, \cdot)$ on $\U^+$ (see \cite[38.2.3]{Lu93}). 

The canonical basis for $\bf f$ can be parametrized by $\KP$ as well \cite{Lu90, Lu93}, and we shall write $\{b_{\bf m} | {\bf m} \in \KP \}$; via the isomorphism ${\bf f} \cong \U^+$, we obtain the canonical basis $\{b_{\bf m}^+ | {\bf m} \in \KP \}$ for $\U^+$ and $\{b_{\bf m}^- | {\bf m} \in \KP \}$ for $\U^-$. 
Each canonical basis element in $\U^+$ can be characterized by the following 2 properties:
\begin{enumerate}
\item[(i)]
 $\overline{b_{\bf m}^+} =b_{\bf m}^+$;
\item[(ii)]
 $b_{\bf m}^+ \in E_{\bf m} + \sum_{\bf m' \in \KP} q^{-1} \Z[q^{-1}] E_{\bf m'}$. 
\end{enumerate}
There is a suitable partial order $\preceq$ on $\KP$ (compatible with the convex order on $\Phi^+$), so that $b_{\bf m}^+ \in E_{\bf m} + \sum_{\bf m' \prec \bf m} q^{-1} \Z[q^{-1}] E_{\bf m'}$ \cite{Lu93} (also cf. \cite{Ka14, BKM14}). Note $\bf m' \preceq \bf m$ implies that $\sum_{\beta \in \Phi^+} m'_\beta \beta =\sum_{\beta \in \Phi^+} m_\beta \beta$.
Denote
\begin{align}
  \label{eq:bE}
b_{\bf m}^+ =\sum_{\bf m' \preceq \bf m} R_{{\bf m'}, {\bf m}}(q) E_{\bf m'}
\end{align}
where $R_{{\bf m'}, {\bf m}}(q) \in q^{-1} \Z[q^{-1}]$ for $\bf m' \prec \bf m$, and $R_{{\bf m}, {\bf m}}(q)=1$. 

Similarly, by applying suitable braid group actions to construct root vectors and their divided powers for $\U^-$ (which lie in $\U^-_{\Z[q^{-1}]}$), we obtain a PBW basis $\bold{W}^- := \{F_{\bf m} | {\bf m} \in \KP \}$ for $\U^-$, which is orthogonal with respect to the bilinear form $(\cdot, \cdot)$ on $\U^-$. The canonical basis $\{b_{\bf n}^- | {\bf n} \in \KP \}$ for $\U^-$ satisfies $b_{\bf n}^- \in F_{\bf n} + \sum_{\bf n' \prec \bf n} q^{-1} \Z[q^{-1}] F_{\bf n'}$. It follows that 
\begin{align}
  \label{eq:bF}
b_{\bf n}^- =\sum_{\bf n' \preceq \bf n} R_{{\bf n'}, {\bf n}}(q) F_{\bf n'}.
\end{align}

We formulate the following distinguished case of Theorem~\ref{thm:obasis} separately and a little more precisely, which is the original goal of this paper.

\begin{thm} [PBW basis for $\Udot$] 
 \label{thm:PBW}
Let $\U$ be finite type. Then the set
\[
\bold{W}\fus\bold{W} =\{ E_{\bf m} \fus_\zeta F_{\bf n} \mid  {\bf m}, {\bf n} \in \KP, \zeta \in X \} 
\]
forms an orthogonal basis (called a {\em PBW basis}) for $\Udot$. The {\em dual PBW basis} for $\Udot$ is given by 
\[
\Big \{ \prod_{\beta \in \Phi^+} (q_\beta^{-2}; q_\beta^{-2})_{m_\beta}(q_\beta^{-2}; q_\beta^{-2})_{n_\beta}E_{\bf m} \fus_\zeta F_{\bf n} \mid  {\bf m}, {\bf n} \in \KP, \zeta \in X \Big \}. 
\]
\end{thm}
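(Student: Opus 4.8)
The plan is to deduce the statement from Theorem~\ref{thm:obasis} applied to the pair of PBW bases. Via the identifications $\bff \cong \U^+$ and $\bff \cong \U^-$, which are isometries for the bilinear forms, the PBW bases $\bold{W}^+ =\{E_{\bf m}\}$ and $\bold{W}^- =\{F_{\bf n}\}$ correspond to $\Q(q)$-bases $B_1$ and $B_2$ of $\bff$. With the convention preceding Proposition~\ref{prop:limit} that $E_{\bf m}\fus_\zeta F_{\bf n}$ abbreviates $E_{\bf m}^+ \fus_\zeta F_{\bf n}^-$, Theorem~\ref{thm:obasis}(2) applied to $B_1,B_2$ immediately gives that $\bold{W}\fus\bold{W} =\{E_{\bf m}\fus_\zeta F_{\bf n}\mid {\bf m},{\bf n}\in\KP,\ \zeta\in X\}$ is a $\Q(q)$-basis of $\Udot$.

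Next I would read off orthogonality from the pairing formula \eqref{eq:BF2}: for all ${\bf m},{\bf m}',{\bf n},{\bf n}'\in\KP$ and $\zeta,\zeta'\in X$,
\[
\big(E_{\bf m}\fus_\zeta F_{\bf n},\, E_{{\bf m}'}\fus_{\zeta'} F_{{\bf n}'}\big)=\delta_{\zeta,\zeta'}\,(E_{\bf m},E_{{\bf m}'})\,(F_{\bf n},F_{{\bf n}'}).
\]
Since $\bold{W}^+$ and $\bold{W}^-$ are orthogonal with respect to the bilinear forms on $\U^+$ and $\U^-$ (Lusztig \cite[38.2.3]{Lu93}), the right-hand side vanishes unless $\zeta=\zeta'$, ${\bf m}={\bf m}'$ and ${\bf n}={\bf n}'$; hence $\bold{W}\fus\bold{W}$ is an orthogonal basis. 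It then remains to compute the self-pairings. Writing $E_{\bf m}=\prod_a E_{\beta_a}^{(m_{\beta_a})}$, the orthogonality and multiplicativity of the PBW monomials together with the rank-one evaluation \eqref{BFhalf}, transported along the braid group operators as in \cite[38.2.3]{Lu93}, give
\[
(E_{\bf m},E_{\bf m})=\prod_{\beta\in\Phi^+}(q_\beta^{-2};q_\beta^{-2})_{m_\beta}^{-1},\qquad (F_{\bf n},F_{\bf n})=\prod_{\beta\in\Phi^+}(q_\beta^{-2};q_\beta^{-2})_{n_\beta}^{-1},
\]
so that $\big(E_{\bf m}\fus_\zeta F_{\bf n},\,E_{\bf m}\fus_\zeta F_{\bf n}\big)=\prod_{\beta\in\Phi^+}(q_\beta^{-2};q_\beta^{-2})_{m_\beta}^{-1}(q_\beta^{-2};q_\beta^{-2})_{n_\beta}^{-1}$, a nonzero element of $\Q(q)$.

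Finally I would identify the dual basis. Since $\bold{W}\fus\bold{W}$ is a basis whose Gram matrix with respect to $(\cdot,\cdot)$ is diagonal with nonzero diagonal entries (compatibly with the decomposition $\Udot=\bigoplus_{\zeta_1,\zeta_2}\one_{\zeta_1}\Udot\one_{\zeta_2}$ by \eqref{eq:ad1}), the form is non-degenerate on each such component and the basis dual to an orthogonal basis is obtained by dividing each vector by its self-pairing. Dividing $E_{\bf m}\fus_\zeta F_{\bf n}$ by the scalar just computed multiplies it by $\prod_{\beta\in\Phi^+}(q_\beta^{-2};q_\beta^{-2})_{m_\beta}(q_\beta^{-2};q_\beta^{-2})_{n_\beta}$, yielding exactly the asserted dual PBW basis.

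The only genuinely nontrivial input beyond Theorem~\ref{thm:obasis} is the evaluation $(E_\beta^{(m)},E_\beta^{(m)})=(q_\beta^{-2};q_\beta^{-2})_m^{-1}$ for non-simple $\beta$, which relies on Lusztig's compatibility of the braid group action with the bilinear form; this is classical and recorded in \cite[38.2.3]{Lu93}. Consequently I expect the proof to be short: it is essentially a specialization of Theorem~\ref{thm:obasis} together with the bookkeeping of these norms.
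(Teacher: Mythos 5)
Your proposal is correct and follows essentially the same route as the paper: deduce the basis property from Theorem~\ref{thm:obasis}, read off orthogonality from \eqref{eq:BF2} together with Lusztig's orthogonality of the PBW bases of $\U^\pm$ \cite[38.2.3]{Lu93}, and obtain the dual basis from the norm formula $(E_{\bf m},E_{\bf m})=\prod_{\beta\in\Phi^+}(q_\beta^{-2};q_\beta^{-2})_{m_\beta}^{-1}$. The paper's proof is just a terser version of the same argument.
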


\begin{proof}
Clearly $\bold{W}\fus\bold{W}$ is a basis for $\Udot$, by Theorem~\ref{thm:obasis}.
Its orthogonality follows by \eqref{eq:BF2} and the orthogonality of the PBW bases for $\U^{\pm}$ \cite[38.2.3]{Lu93}. Note that $(E_{\bf m}, E_{\bf m}) =\prod_{\beta \in \Phi^+} (q_\beta^{-2}; q_\beta^{-2})_{m_\beta}^{-1}$, cf. \cite{Lu93}. The dual PBW basis follows from this and \eqref{eq:BF2}. 
\end{proof}

Thanks to $E_{\bf m}  \fus_\zeta 1  = E_{\bf m}  \one_{\zeta}$ and $1 \fus_\zeta F_{\bf n} = F_{\bf n} \one_{\zeta}$, the PBW bases for $\U^\pm$ are parts of the PBW basis for $\Udot$. 

One can also formulate hybrid bases $\bold{W}\fus \B$ and $\B \fus \bold{W}$ for $\Udot$. 

\subsection{PBW basis vs (fused) canonical basis}

The fused canonical basis for $\Udot$ in finite type can now be written as $\B \fus \B =\{b_{\bf m} \fus_\zeta b_{\bf n} | {\bf m, \bf n} \in \KP, \zeta\in X \}$. We formulate its relation to the PBW basis $\bold{W}\fus\bold{W}$. 

\begin{prop}
  \label{prop:fCB-PBW}
Let $\bf m, \bf n \in \KP$ and $\zeta \in X$. We have 
\begin{align*}
b_{\bf m} \fus_\zeta b_{\bf n} &= E_{\bf m} \fus_\zeta F_{\bf n}  + 
\sum_{\stackrel{{\bf (m', n') \neq (m, n)}}{\bf m' \preceq \bf m, \bf n' \preceq \bf n}} R_{{\bf m'}, {\bf m}} (q) R_{{\bf n'}, {\bf n}} (q) \, 
E_{\bf m'} \fus_\zeta F_{\bf n'}.
 \end{align*}
Moreover, for the ADE type, we have $R_{{\bf m'}, {\bf m}} (q),  R_{{\bf n'}, {\bf n}} (q) \in \N[[q^{-1}]] \cap \Q(q)$. 
\end{prop}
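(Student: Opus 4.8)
The plan is to deduce the identity directly from the isomorphism $\mathcal F_\zeta$ of Theorem~\ref{thm:obasis} together with the two PBW-expansion formulas \eqref{eq:bE} and \eqref{eq:bF} for the canonical bases of $\U^+$ and $\U^-$. Concretely, since $\mathcal F_\zeta$ is $\Q(q)$-linear and sends $x \otimes y \mapsto x \fus_\zeta y$, I would apply it to the pure tensor $b_{\bf m}^+ \otimes b_{\bf n}^- \in \U^+ \otimes \U^-$. Expanding $b_{\bf m}^+ = \sum_{\bf m' \preceq \bf m} R_{{\bf m'},{\bf m}}(q) E_{\bf m'}$ and $b_{\bf n}^- = \sum_{\bf n' \preceq \bf n} R_{{\bf n'},{\bf n}}(q) F_{\bf n'}$ and taking the tensor product gives
\[
b_{\bf m}^+ \otimes b_{\bf n}^- = \sum_{\bf m' \preceq \bf m,\, \bf n' \preceq \bf n} R_{{\bf m'},{\bf m}}(q)\, R_{{\bf n'},{\bf n}}(q)\, E_{\bf m'} \otimes F_{\bf n'},
\]
and applying $\mathcal F_\zeta$ to both sides yields exactly the claimed formula, once we recall the definitions $b_{\bf m} \fus_\zeta b_{\bf n} = \mathcal F_\zeta(b_{\bf m}^+ \otimes b_{\bf n}^-)$ and $E_{\bf m'} \fus_\zeta F_{\bf n'} = \mathcal F_\zeta(E_{\bf m'} \otimes F_{\bf n'})$. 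The diagonal term $\bf m' = \bf m$, $\bf n' = \bf n$ has coefficient $R_{{\bf m},{\bf m}}(q) R_{{\bf n},{\bf n}}(q) = 1$, producing the leading term $E_{\bf m} \fus_\zeta F_{\bf n}$, and all other terms are indexed by $({\bf m',n'}) \neq ({\bf m,n})$ with $\bf m' \preceq \bf m$, $\bf n' \preceq \bf n$, matching the stated summation range.

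For the second assertion (the ADE positivity of the coefficients), the point is that the coefficients $R_{{\bf m'},{\bf m}}(q)$ and $R_{{\bf n'},{\bf n}}(q)$ appearing here are precisely the entries of the transition matrix between the canonical basis and the PBW basis of $\U^+$ (resp. $\U^-$). In ADE type these lie in $\N[q^{-1}]$ by the PBW-positivity of the canonical basis of $\U^+$ — first proved by Lusztig \cite{Lu90} for adapted reduced expressions of $w_0$ and by Kato \cite{Ka14} (see also \cite{BKM14, Oy18}) for arbitrary reduced expressions, as recalled in the introduction. Since $\N[q^{-1}] \subseteq \N[[q^{-1}]] \cap \Q(q)$, I would simply cite this positivity result and note that products of such coefficients remain in $\N[[q^{-1}]] \cap \Q(q)$, which gives the claim.

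There is really no substantive obstacle here: the proposition is a formal transport of \eqref{eq:bE}–\eqref{eq:bF} across the linear isomorphism $\mathcal F_\zeta$, and the positivity is an immediate invocation of the already-cited Kato theorem. The only mild care needed is bookkeeping — verifying that $\mathcal F_\zeta$ genuinely sends $E_{\bf m'} \otimes F_{\bf n'}$ to $E_{\bf m'} \fus_\zeta F_{\bf n'}$ with no extra scalars (this is just the definition of $\fus_\zeta$ on pure tensors via $\mathcal F_\zeta$), and that the index set in the sum is correctly described by the partial order $\preceq$ on $\KP$ so that $({\bf m',n'}) \neq ({\bf m,n})$ together with $\bf m' \preceq \bf m$, $\bf n' \preceq \bf n$ captures exactly the off-diagonal terms. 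I expect the proof to be three or four lines long in the final writeup.
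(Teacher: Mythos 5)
Your proposal is correct and matches the paper's proof: the paper likewise deduces the expansion from the bilinearity of $\fus_\zeta$ (equivalently, the linearity of $\mathcal F_\zeta$) applied to \eqref{eq:bE}--\eqref{eq:bF}, and handles the ADE positivity by citing Lusztig's adapted case and Kato's general theorem. No gaps.
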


\begin{proof}
By construction (see Proposition~\ref{prop:approx}), $x\fus_\zeta y$ is bilinear on $x$ and $y$. Thus the expansion formula for $b_{\bf m} \fus_\zeta b_{\bf n}$ follows by \eqref{eq:bE}--\eqref{eq:bF}. 

Assume $\U$ is of ADE type.  
Then we have $R_{{\bf m'}, {\bf m}} (q), R_{{\bf n'}, {\bf n}} (q) \in \N[q^{-1}]$. This was first proved in \cite[Corollary 10.7]{Lu90} when the reduced expressions of the longest element $w_0 \in W$ are ``adapted", and then proved by Syu Kato \cite{Ka14} in general (also see \cite{BKM14} and \cite{Oy18} for different approaches). 
\end{proof}

Now we formulate the relation between the canonical basis and PBW basis for $\Udot$.

\begin{cor}
  \label{cor:CB-PBW}
Let $\bf m, \bf n \in \KP$ and $\zeta \in X$. We have 
\begin{align*}
b_{\bf m} \diamondsuit_\zeta b_{\bf n} 
&= 
\sum_{\stackrel{(b_{\bf m_1}, b_{\bf n_1}) \leq (b_{\bf m}, b_{\bf n})}{\bf m' \preceq \bf m_1, \bf n' \preceq \bf n_1}} 
P_{(b_{\bf m_1}, b_{\bf n_1}),(b_{\bf m}, b_{\bf n})}  (q) \,
R_{{\bf m'}, {\bf m_1}} (q) R_{{\bf n'}, {\bf n_1}} (q) \, 
E_{\bf m'} \fus_\zeta F_{\bf n'}.
 \end{align*}
Moreover, for the ADE type, $P_{(b_{\bf m_1}, b_{\bf n_1}),(b_{\bf m}, b_{\bf n})}  (q) \,
R_{{\bf m'}, {\bf m_1}} (q) R_{{\bf n'}, {\bf n_1}} (q) \in \N[[q^{-1}]] \cap \Q(q)$. 
\end{cor}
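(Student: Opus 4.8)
The plan is to obtain \corref{cor:CB-PBW} by simply composing the two expansion formulas already established in \thmref{thm:CB-fCB} and \propref{prop:fCB-PBW}. Concretely, I would start from the expansion of $b_{\bf m} \diamondsuit_\zeta b_{\bf n}$ in the fused canonical basis,
\[
b_{\bf m} \diamondsuit_\zeta b_{\bf n} = \sum_{(b_{\bf m_1}, b_{\bf n_1}) \leq (b_{\bf m}, b_{\bf n})} P_{(b_{\bf m_1}, b_{\bf n_1}),(b_{\bf m}, b_{\bf n})}(q)\, b_{\bf m_1}\fus_\zeta b_{\bf n_1},
\]
given by \thmref{thm:CB-fCB} (with the sum reindexed so that $(b_{\bf m_1},b_{\bf n_1})$ ranges over pairs $\le (b_{\bf m},b_{\bf n})$ and the diagonal term has coefficient $1$). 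Then I would substitute into each $b_{\bf m_1}\fus_\zeta b_{\bf n_1}$ the PBW expansion from \propref{prop:fCB-PBW},
\[
b_{\bf m_1}\fus_\zeta b_{\bf n_1} = \sum_{\bf m'\preceq \bf m_1,\ \bf n'\preceq \bf n_1} R_{{\bf m'},{\bf m_1}}(q)\, R_{{\bf n'},{\bf n_1}}(q)\, E_{\bf m'}\fus_\zeta F_{\bf n'},
\]
and collect terms. This yields exactly the double sum in the statement, since $\{E_{\bf m'}\fus_\zeta F_{\bf n'}\}$ is a basis (\thmref{thm:PBW}) and so the combined coefficients are well-defined.

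For the ADE positivity claim, I would argue that each of the three factors lies in $\N[[q^{-1}]]\cap\Q(q)$: $P_{(b_{\bf m_1}, b_{\bf n_1}),(b_{\bf m}, b_{\bf n})}(q)$ by the ADE part of \thmref{thm:CB-fCB}, and $R_{{\bf m'},{\bf m_1}}(q)$, $R_{{\bf n'},{\bf n_1}}(q)$ by the ADE part of \propref{prop:fCB-PBW} (which records the theorems of Lusztig \cite{Lu90} and Kato \cite{Ka14} that the canonical basis of $\U^\pm$ is PBW-positive). I would then invoke the fact that $\N[[q^{-1}]]\cap\Q(q)$ is closed under multiplication — a product of power series with nonnegative coefficients has nonnegative coefficients, and a product of rational functions is rational — so the threefold product stays in $\N[[q^{-1}]]\cap\Q(q)$.

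There is really no serious obstacle here; this corollary is a formal consequence of the two preceding results, and the only points requiring a word of care are bookkeeping ones. First, one should make sure the index sets match: in \thmref{thm:CB-fCB} the outer sum is over the combinatorial partial order $\le$ on $\B\times\B$ defined via heights, while in \propref{prop:fCB-PBW} the inner sums are over the convex/Kostant-partition order $\preceq$, and the final formula correctly nests the latter inside the former. Second, one should note that the term $E_{\bf m}\fus_\zeta F_{\bf n}$ indeed appears (from $(b_{\bf m_1},b_{\bf n_1})=(b_{\bf m},b_{\bf n})$ and $\bf m'=\bf m$, $\bf n'=\bf n$) with coefficient $1$, so the transition matrix from the canonical basis to the PBW basis of $\Udot$ is unital triangular with respect to the induced order — which is the promised relation between the two bases. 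I would keep the proof to a couple of sentences: substitute, collect, and quote closure of $\N[[q^{-1}]]\cap\Q(q)$ under products.
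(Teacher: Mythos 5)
Your proposal is correct and follows exactly the paper's route: the corollary is obtained by substituting the PBW expansion of Proposition~\ref{prop:fCB-PBW} into the fused-canonical-basis expansion of Theorem~\ref{thm:CB-fCB}, and the ADE positivity follows since $\N[[q^{-1}]]\cap\Q(q)$ is closed under products. Your bookkeeping remarks about the two partial orders and the unital leading term are accurate refinements of what the paper leaves implicit.
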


\begin{proof}
Follows by combining Theorem~\ref{thm:CB-fCB} and Proposition~\ref{prop:fCB-PBW}. 
\end{proof}

\begin{rem}  
Assume $\U$ is of ADE type. The (dual) PBW bases for $\U^+$ have been categorified in terms of (proper) standard modules over the KLR or quiver Hecke algebras \cite{Ka14, BKM14}. 
In light of the positivity properties of the transition matrices in Theorem~\ref{thm:CB-fCB}, Proposition~\ref{prop:fCB-PBW} and Corollary~\ref{cor:CB-PBW}, it is reasonable to ask for a categorification of the fused canonical basis and the PBW basis of $\Udot$. 
\end{rem}

\begin{rem}  
  \label{rem:affine}
Let $\U$ be of affine type. By a theorem of \cite{BCP99}, there exists a PBW basis $\bold{W}$ for $\bff$, which is actually a $\Z[q^{-1}]$-basis for the $\bff_{\Z[q^{-1}]}$; it gives rise to PBW bases for $\U^+$ and for $\U^-$. It still holds that the set $\bold{W} \fus \bold{W}$ forms a basis for $\Udot$ by Theorem~\ref{thm:obasis} and may be called its PBW basis. However, this basis is not orthogonal, as the PBW basis for $\U^+$ (or $\U^-$)  in \cite{BCP99} is not either. 
\end{rem}


%
%
\subsection{PBW basis in rank 1}
 \label{sec:rank1}

In the remainder of this section, we set $\U =\U_q(\mathfrak{sl}_2)$, the $\Qq$-algebra generated by $E$, $F$, $K, K^{-1}.$ We shall write $q_i=q$, identify the weight lattice $X =\Z$ and $X^+ =\N$. We work out explicit formulas of PBW basis via canonical basis for $\Udot$, and vice versa.

For $n\in \N$, let $L(n)$ be the highest weight $\U$-module with highest weight vector $\eta_n$, and let ${}^\omega L(n)$ be the lowest weight $\U$-module with lowest weight vector $\xi_{-n}$. For $m \in \Z$, we shall consider the tensor product $\U$-modules ${}^\omega L(p) \otimes L(m+p)$, for various $p\in \N$ such that $m+p\in \N$. 

Recall $\{\EE{a}|a\in \N\}$ is the PBW basis (and also the canonical basis) for $\U^+$, and $\{\FF{b}| b\in \N\}$ is the PBW basis (and the canonical basis) for $\U^-$. By Proposition~\ref{prop:limit} and Theorem~\ref{thm:PBW}, there exists a unique PBW basis element 
\[
\mf w_{m} (a,b):= \EE{a} \fus_{m} \FF{b} \in \Udot\one_{m}
\]
such that 
\begin{align}
  \label{tensor0}
\lim_{p\mapsto \infty}\limits  \Big( \mf w_m(a,b) (\xi_{-p} \otimes \eta_{p+m} ) - \EE{a} \xi_{-p} \otimes \FF{b} \eta_{p+m} \Big ) =0.
\end{align}
In particular, $\mf w_m(a,0) =\EE{a} \one_{m}$ and $\mf w_m(0,b) =\FF{b} \one_{m}$, for $a, b \in \N$. 

\begin{prop} 
 \label{prop:PBW1}
The set $\{\mf w_m(a,b) \mid a,b \in \N, m\in \Z\}$ forms an orthogonal (PBW) basis for $\Udot$ with respect to $(\cdot, \cdot)$. Moreover,  we have 
\begin{align}
 \Big(\mf w_m(a,b), \mf w_{m'} (a',b') \Big) &= \frac{\delta_{a,a'} \delta_{b,b'} \delta_{m,m'}}{(q^{-2}; q^{-2})_a  (q^{-2}; q^{-2})_b}.
  \label{norm}
\end{align}
\end{prop}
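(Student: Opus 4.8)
The plan is to deduce Proposition~\ref{prop:PBW1} directly from the general machinery already set up, specialized to $\U=\U_q(\mathfrak{sl}_2)$, rather than to carry out any new computation. The key point is that $\{\EE{a}\mid a\in\N\}$ is simultaneously the PBW basis and the canonical basis of $\U^+$, and similarly $\{\FF{b}\mid b\in\N\}$ for $\U^-$; the Kostant partition set $\KP$ reduces to $\N$ (there is a single positive root), and $X=\Z$, $X^+=\N$.

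First I would invoke Theorem~\ref{thm:PBW} (equivalently Theorem~\ref{thm:obasis}(2) with $B_1=\{\EE{a}\}$, $B_2=\{\FF{b}\}$) to conclude that $\{\EE{a}\fus_m\FF{b}\mid a,b\in\N,\ m\in\Z\}=\{\mf w_m(a,b)\}$ is a $\Q(q)$-basis of $\Udot$. Next, for the orthogonality and the explicit norm formula \eqref{norm}, I would apply the bilinear-form identity \eqref{eq:BF2} from Theorem~\ref{thm:obasis}(2), which gives
\[
\big(\mf w_m(a,b),\mf w_{m'}(a',b')\big)=\big(\EE{a}\fus_m\FF{b},\,\EE{a'}\fus_{m'}\FF{b'}\big)=\delta_{m,m'}\,(\EE{a},\EE{a'})\,(\FF{b},\FF{b'}).
\]
Then I substitute the rank-one case of \eqref{BFhalf}, namely $(\EE{a},\EE{a'})=\delta_{a,a'}(q^{-2};q^{-2})_a^{-1}$ and $(\FF{b},\FF{b'})=\delta_{b,b'}(q^{-2};q^{-2})_b^{-1}$ (here $q_i=q$), to obtain exactly \eqref{norm}. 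Since the Gram matrix is diagonal with nonzero entries, orthogonality and the basis property are both confirmed, and in particular the basis is called a PBW basis consistently with Theorem~\ref{thm:PBW}.

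There is essentially no obstacle: the proposition is a transparent specialization. The only things to double-check are bookkeeping items — that $\KP\leftrightarrow\N$ under the stated bijection, that the convex order on the single positive root is trivial so that $E_{\bf m}=\EE{a}$ and $F_{\bf n}=\FF{b}$ verbatim, and that the normalization conventions $(\eta_\mu,\eta_\mu)=1$ feeding into \eqref{BFhalf} are the same ones used in \eqref{eq:BF2}. All of these are immediate from the setup in Section~\ref{sec:PBW} and the opening of \S\ref{sec:rank1}. Hence the proof is a one-line citation of \eqref{eq:BF2} together with \eqref{BFhalf}, plus the observation that \eqref{tensor0} is precisely the defining property \eqref{starxy} of $\EE{a}\fus_m\FF{b}$ provided by Proposition~\ref{prop:limit}, so the element $\mf w_m(a,b)$ is well-defined and unique.
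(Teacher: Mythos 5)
Your proposal is correct and follows exactly the paper's own argument: the basis and orthogonality statements are the rank-one specialization of Theorem~\ref{thm:PBW}, and the norm formula \eqref{norm} is obtained by combining \eqref{eq:BF2} with \eqref{BFhalf}. No gaps; your extra bookkeeping remarks are consistent with the conventions of Section~\ref{sec:PBW}.
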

The dual PBW basis for $\Udot$ is  $\{(q^{-2}; q^{-2})_a  (q^{-2}; q^{-2})_b \mf w_m(a,b) \mid a,b \in \N, m\in \Z\}$.

\begin{proof}
The first statement is a rephrasing of Theorem~\ref{thm:PBW}. The explicit bilinear pairing formula \eqref{norm} follows from \eqref{eq:BF2} and the formula \eqref{BFhalf}.
\end{proof}

\subsection{PBW-expansion of  canonical basis in rank 1}

Recall the canonical basis for $\Udot$ consists of the following elements (cf. \cite{K93}, \cite[25.3.2]{Lu93}):
\[
 \EE{a} \FF{b} \one_{m}  \;\; (m \le b-a),
\qquad
\FF{b} \EE{a} \one_{m}  \;\; (m \ge b-a),
\qquad \text{ for $a, b \in \N$},
\]
with the (only) identification 
\begin{align}
  \label{EF=FE}
\EE{a} \FF{b} \one_{m} =\FF{b} \EE{a} \one_{m}, \quad \text{ for } m =b-a.
\end{align}

\begin{thm}
 \label{thm:PBW-CB}
Let $a, b \in \N$ and $m\in \Z$. Then we have 
\begin{align}
 \EE{a} \FF{b} \one_{m} 
 &= \sum_{s=0}^{\min(a,b)} \frac{q^{-s^2+s(a-b +m)}}{(q^{-2}; q^{-2})_s} \mf w_m(a-s,b-s),
 \quad \text{for } m \le b-a,
   \label{CB-W1} \\
  \FF{b} \EE{a} \one_{m} 
 &= \sum_{s=0}^{\min(a,b)}  \frac{q^{-s^2 \red{-} s(a-b +m)}}{(q^{-2}; q^{-2})_s} \mf w_m(a-s,b-s),
 \quad \text{for } m \ge b-a.
   \label{CB-W2}
\end{align}
\end{thm}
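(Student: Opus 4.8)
\textbf{Proof plan for Theorem~\ref{thm:PBW-CB}.}
The plan is to compute both sides acting on the tensor product module ${}^\omega L(p) \otimes L(m+p)$ for $p$ large, and then pass to the limit using \eqref{tensor0} and the characterization in Proposition~\ref{prop:limit}. By Proposition~\ref{prop:PBW1} the elements $\mf w_m(a-s, b-s)$, $0 \le s \le \min(a,b)$, are linearly independent in $\Udot \one_m$ and lie in the appropriate weight/grading range, so it suffices to match the coefficients; equivalently, by the uniqueness in Proposition~\ref{prop:limit}(2), it suffices to show that the right-hand side of \eqref{CB-W1} (resp. \eqref{CB-W2}), when applied to $\xi_{-p} \otimes \eta_{p+m}$, is asymptotically equal (as $p \mapsto \infty$) to $(\EE{a}\FF{b}\one_m)(\xi_{-p}\otimes\eta_{p+m})$ (resp. $(\FF{b}\EE{a}\one_m)(\xi_{-p}\otimes\eta_{p+m})$). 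Using \eqref{eq:y}, the left side expands as $\EE{a}\FF{b}(\xi_{-p}\otimes\eta_{p+m}) = \xi_{-p}\otimes \EE{a}\FF{b}\eta_{p+m}$ for $\EE{a}\FF{b}\one_m$, but I will instead keep $E^{(a)}$ acting on the first tensor factor and $F^{(b)}$ on the second and track the cross terms produced by the coproduct \eqref{eq:Delta}.

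The key computational input is the rank-one commutation relation in a highest (resp. lowest) weight module: on $L(p+m)$ one has the standard identity $\EE{a}\FF{b}\eta_{p+m} = \sum_{t} \begin{bmatrix} m'+b-a-t+\cdots \end{bmatrix}$-type formula, or more cleanly, I will use that in ${}^\omega L(p) \otimes L(p+m)$ the vector $\EE{a}\xi_{-p} \otimes \FF{b}\eta_{p+m}$ can be re-expressed via Lemma~\ref{lem:yEi} and Lemma~\ref{lem:xFi} (iterated) in terms of $\mf w_m(a-s,b-s)(\xi_{-p}\otimes\eta_{p+m})$ plus $o(1)$. Concretely: by \eqref{tensor0}, $\mf w_m(a-s,b-s)(\xi_{-p}\otimes\eta_{p+m}) = \EE{a-s}\xi_{-p} \otimes \FF{b-s}\eta_{p+m} + o(1)$, so the claim \eqref{CB-W1} is equivalent to the asymptotic identity
\begin{align*}
(\EE{a}\FF{b}\one_m)(\xi_{-p}\otimes\eta_{p+m}) = \sum_{s=0}^{\min(a,b)} \frac{q^{-s^2+s(a-b+m)}}{(q^{-2};q^{-2})_s}\, \EE{a-s}\xi_{-p}\otimes\FF{b-s}\eta_{p+m} + o(1).
\end{align*}
To prove this I compute $E^{(a)}(\xi_{-p}\otimes F^{(b)}\eta_{p+m})$ directly: applying $\Delta$, $E$ acts on $\xi_{-p}$ (killing it, since $\xi_{-p}$ is lowest weight — so the only surviving terms come from the $\tK \otimes E$ part of $\Delta(E)$ after commuting $E$'s past $F$'s in the first factor, producing negative powers of $q$ weighted by $\langle i,p\rangle$). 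This is a finite computation; the $q^{-2\langle i,p\rangle}$ factors are precisely the $o(1)$ terms, and the surviving finite part gives a rank-one $q$-binomial sum which I recognize as the stated formula after a $q$-series identity (a form of the $q$-Chu–Vandermonde / Rogers identity). The exponent $-s^2 + s(a-b+m)$ arises from collecting the weight shifts $\langle i, \zeta + |x'| + |y|\rangle$ appearing in Lemma~\ref{lem:yEi} across the $s$ contractions.

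The second formula \eqref{CB-W2} follows by the completely symmetric computation applying $F^{(b)}E^{(a)}$, with $F$ acting on the second factor trivially on $\eta_{p+m}$'s image appropriately — equivalently, it follows from \eqref{CB-W1} by applying the anti-involution or the bar-type symmetry, or simply by redoing the computation with the roles of the weight shifts producing $-s^2 - s(a-b+m)$ (the sign flip on the linear term reflects that $F$ now carries the $\tK^{-1}$ from $\Delta(F)$ rather than $E$ carrying $\tK$). Finally, the compatibility with the identification \eqref{EF=FE} is automatic: when $m = b-a$ the linear terms $s(a-b+m) = 0$ in both \eqref{CB-W1} and \eqref{CB-W2}, so the two right-hand sides literally coincide, consistent with $\EE{a}\FF{b}\one_m = \FF{b}\EE{a}\one_m$.

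The main obstacle I anticipate is the bookkeeping of $q$-powers: correctly accumulating the exponents $\langle i, \zeta + |x'| + |y|\rangle + 2$ from repeated application of Lemma~\ref{lem:yEi} (or a direct coproduct expansion of $E^{(a)}$), separating the genuinely convergent part from the $q^{-2\langle i,p\rangle}$-decaying part, and then resumming into closed form. The resummation step — identifying $\sum_t (\text{$q$-binomials})$ with $\frac{q^{-s^2+s(a-b+m)}}{(q^{-2};q^{-2})_s}$ — is a known but delicate $q$-hypergeometric identity, and getting the normalization of the divided powers $\EE{a} = E^a/[a]!$ to interact correctly with $(q^{-2};q^{-2})_s$ will require care; I would verify it first in the smallest cases $s=1$ and $s=2$ to pin down all conventions before writing the general induction.
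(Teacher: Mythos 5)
Your overall strategy is the same as the paper's: reduce \eqref{CB-W1} to the asymptotic statement
\[
(\EE{a}\FF{b}\one_m)(\xi_{-p}\otimes\eta_{p+m})
=\sum_{s=0}^{\min(a,b)}\frac{q^{-s^2+s(a-b+m)}}{(q^{-2};q^{-2})_s}\,\EE{a-s}\xi_{-p}\otimes\FF{b-s}\eta_{p+m}+o(1)
\]
via \eqref{tensor0} and the uniqueness in Proposition~\ref{prop:limit}, and likewise for \eqref{CB-W2}. That reduction is correct and is exactly how the paper argues. The gap is that you never actually produce the coefficients: the ``finite computation'' you defer to a $q$-Chu--Vandermonde-type resummation is the entire content of the theorem. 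The paper does not rederive the expansion from the coproduct at all; it quotes Lusztig's closed formula \cite[\S25.3]{Lu93}, namely $\EE{a}\FF{b}(\xi_{-p}\otimes\eta_{p+m})=\sum_{s}q^{s(a-s-p)}\qbinom{s-b+m+p}{s}\EE{a-s}\xi_{-p}\otimes\FF{b-s}\eta_{p+m}$, and then merely rewrites $q^{s(a-s-p)}\qbinom{s-b+m+p}{s}=q^{-s^2+s(a-b+m)}\prod_{d=1}^{s}\frac{1-q^{2b-2m-2d-2p}}{1-q^{-2d}}$; the deviation of this from $q^{-s^2+s(a-b+m)}/(q^{-2};q^{-2})_s$ is visibly of the form \eqref{eq:s0}, hence $o(1)$ by Lemma~\ref{lem:lim0}. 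No $q$-hypergeometric identity is needed, only this one algebraic rewriting (and its mirror \eqref{L25.3b} for $m\ge b-a$). If you do want to rederive the expansion yourself, beware that your sketch of the coproduct computation is wrong in a way that would derail it: $E$ does \emph{not} annihilate the lowest weight vector $\xi_{-p}$ (it is $F$ that does); the $E\otimes 1$ summands of $\Delta(E^{(a)})$ acting on $\xi_{-p}\otimes\FF{b}\eta_{p+m}$ survive and produce precisely the leading term $\EE{a}\xi_{-p}\otimes\FF{b}\eta_{p+m}$, while the $\tK\otimes E$ summands, after commuting $E$ past the $F$'s in the \emph{second} factor via \eqref{Eiy}, give the $s\ge 1$ terms and the $q^{-2p}$-decaying corrections. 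Your observation that the two formulas agree when $m=b-a$ (both exponents reduce to $-s^2$), consistent with \eqref{EF=FE}, is correct.
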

Note that the leading coefficients are 1 and all the (non-leading) coefficients in \eqref{CB-W1}--\eqref{CB-W2} lies in $q^{-1} \N[[q^{-1}]] \cap \Q(q)$. 

\begin{proof}
Assume $m\le b-a$. We consider the action of $\U$ on the module ${}^\omega L(p) \otimes L(m+p)$, for various $p\ge |m|$. Recall the following formula from \cite[\S25.3]{Lu93}, where we have replaced Lusztig's $\eta_{q}$ by $\eta_{p+m}$ here and his $(-n+2b)$ by $m$ here:
\begin{align}  \label{L25.3a}
&\EE{a} \FF{b} (\xi_{-p} \otimes \eta_{p+m}) 
\\
& =\sum_{s\ge 0; s \le a, s\le b} q^{s(a-s-p)} 
\qbinom{s-b+m+p}{s} \EE{a-s} \xi_{-p} \otimes \FF{b-s} \eta_{p+m}.
\notag
\end{align}
To make even more clear the dependence on $p$ of these coefficients, we rewrite the formula \eqref{L25.3a} as
\begin{align}
&\EE{a} \FF{b} (\xi_{-p} \otimes \eta_{p+m}) 
 \label{eq:EFpq} \\
& =\sum_{s\ge 0; s \le a, s\le b} q^{-s^2+s(a-b+m)} 
\prod_{d=1}^s \frac{1 -q^{2b-2m-2d-2p}}{1 -q^{-2d}}  \EE{a-s} \xi_{-p} \otimes \FF{b-s} \eta_{p+m}
\notag \\
& =\sum_{s\ge 0; s \le a, s\le b} \frac{q^{-s^2+s(a-b+m)} }{
(q^{-2}; q^{-2})_s}    \EE{a-s} \xi_{-p} \otimes \FF{b-s} \eta_{p+m}
 \notag \\
&\;  -\sum_{s\ge 0; s \le a, s\le b}
 \frac{
q^{-s^2+s(a-b+m)}}{(q^{-2}; q^{-2})_s} 
\big(1- (q^{2b-2m-2-2p};q^{-2})_s \big) \EE{a-s} \xi_{-p} \otimes \FF{b-s} \eta_{p+m}.
\notag
\end{align}
Note that the first summand on the RHS\eqref{eq:EFpq} has coefficients independent of $p$; the second summand is an asymptotically-zero standard sequence as $p$ varies, and so it has limit $0$ as $p$ tends to $\infty$ by Lemma~\ref{lem:lim0}. Therefore, \eqref{CB-W1} follows from the identities \eqref{tensor0} and \eqref{eq:EFpq}.

Now assume $m\ge b-a$. Recall the following formula from \cite[\S25.3]{Lu93}:
\begin{align}
  \label{L25.3b}
&\FF{b} \EE{a} (\xi_{-p} \otimes \eta_{p+m}) 
\\
& =\sum_{s\ge 0; s \le a, s\le b} q^{s(b-s-m-p)} 
\qbinom{s-a+p}{s} \EE{a-s} \xi_{-p} \otimes \FF{b-s} \eta_{p+m}.
\notag
\end{align}
This can be rewritten as
\begin{align}
&\FF{b} \EE{a} (\xi_{-p} \otimes \eta_{p+m}) 
\label{eq:EFpq2} \\
& =\sum_{s\ge 0; s \le a, s\le b} q^{-s^2 \red{-}s(a-b+m)} 
\prod_{d=1}^s \frac{1 -q^{2a-2d-2p}}{1 -q^{-2d}}  \EE{a-s} \xi_{-p} \otimes \FF{b-s} \eta_{p+m}
\notag \\
& =\sum_{s\ge 0; s \le a, s\le b} \frac{q^{-s^2\red{-}s(a-b+m)} }{
(q^{-2}; q^{-2})_s}    \EE{a-s} \xi_{-p} \otimes \FF{b-s} \eta_{p+m}
 \notag \\
&\quad - \sum_{s\ge 0; s \le a, s\le b} 
\frac{ q^{-s^2\red{-}s(a-b+m)} }{(q^{-2}; q^{-2})_s} 
\big(1- (q^{2a-2-2p};q^{-2})_s \big)  \EE{a-s} \xi_{-p} \otimes \FF{b-s} \eta_{p+m}.
\notag
\end{align}
The second summand above has limit $0$ as $p$ tends to $\infty$, and \eqref{CB-W2} follows.
\end{proof}

\begin{cor}
  \label{cor:BF1}
The bilinear pairings between canonical basis elements are given by:
\begin{align*}
&\big(\EE{a} \FF{b} \one_{m}, \EE{a'} \FF{b'} \one_{m} \big)
\\
&=\sum_{s=\min(0,a-a')}^{\min(a,b)} \frac{
q^{-s^2 -(a'-a+s)^2 +(a'-a+2s)(a-b+m)} }{ 
(q^{-2}; q^{-2})_{a - s} (q^{-2}; q^{-2})_{b - s} (q^{-2}; q^{-2})_{s} (q^{-2}; q^{-2})_{a' - a + s}}
\notag
\end{align*}
if $m\le b-a =b'-a'$;
\begin{align*}
& \big(\FF{b} \EE{a} \one_{m}, \FF{b'} \EE{a'} \one_{m} \big)
\\
&=\sum_{s=\min(0,a-a')}^{\min(a,b)} \frac{
q^{-s^2 -(a'-a+s)^2 \red{-} (a'-a+2s)(a-b+m)} }{ 
(q^{-2}; q^{-2})_{a - s} (q^{-2}; q^{-2})_{b - s} (q^{-2}; q^{-2})_{s} (q^{-2}; q^{-2})_{a' - a + s}}
\notag
\end{align*}
if $m\ge b-a =b'-a'$;
The bilinear pairings are $0$ whenever $b-a \neq b'-a'$.
\end{cor}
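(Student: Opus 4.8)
The plan is to deduce Corollary~\ref{cor:BF1} directly from \thmref{thm:PBW-CB} together with the orthogonality formula \eqref{norm} for the PBW basis of $\Udot$ established in \propref{prop:PBW1}. The key observation is that everything reduces to a single bilinear expansion: substitute the PBW-expansions of two canonical basis elements into the bilinear form and use that the form pairs PBW basis elements diagonally.

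First I would treat the case $m \le b-a = b'-a'$. By \eqref{CB-W1}, we have
\[
\EE{a}\FF{b}\one_m = \sum_{s=0}^{\min(a,b)} \frac{q^{-s^2+s(a-b+m)}}{(q^{-2};q^{-2})_s}\,\mf w_m(a-s,b-s),
\qquad
\EE{a'}\FF{b'}\one_m = \sum_{t=0}^{\min(a',b')} \frac{q^{-t^2+t(a'-b'+m)}}{(q^{-2};q^{-2})_t}\,\mf w_m(a'-t,b'-t).
\]
Applying the bilinearity of $(\cdot,\cdot)$ and then \eqref{norm}, the double sum collapses to the diagonal terms where $a-s = a'-t$ and $b-s = b'-t$; since $b-a = b'-a'$, both conditions are equivalent to the single equation $t = a'-a+s$. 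Thus one gets a single sum over $s$, with the $s$-th term carrying the product $\frac{q^{-s^2+s(a-b+m)}}{(q^{-2};q^{-2})_s}\cdot\frac{q^{-t^2+t(a'-b'+m)}}{(q^{-2};q^{-2})_t}\cdot\frac{1}{(q^{-2};q^{-2})_{a-s}(q^{-2};q^{-2})_{b-s}}$ evaluated at $t = a'-a+s$. Since $a'-b'+m = a-b+m$ (again by $b-a = b'-a'$), the exponent of $q$ becomes $-s^2 - (a'-a+s)^2 + (s + (a'-a+s))(a-b+m) = -s^2 - (a'-a+s)^2 + (a'-a+2s)(a-b+m)$, which matches the claimed formula; the four $q$-Pochhammer factors in the denominator are exactly $(q^{-2};q^{-2})_{a-s}(q^{-2};q^{-2})_{b-s}(q^{-2};q^{-2})_s(q^{-2};q^{-2})_{a'-a+s}$. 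The summation range: $s$ runs over $0 \le s \le \min(a,b)$ and simultaneously $0 \le t = a'-a+s \le \min(a',b')$; the lower constraint gives $s \ge \max(0, a-a') = -\min(0, a'-a)$, i.e.\ $s \ge \min(0,a-a')$ after noting $\min(0,a-a') = -\max(0,a'-a)$, and the upper constraints both reduce to $s \le \min(a,b)$ using $b-a=b'-a'$. This yields the stated range $\min(0,a-a') \le s \le \min(a,b)$.

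The case $m \ge b-a = b'-a'$ is entirely analogous, using \eqref{CB-W2} in place of \eqref{CB-W1}; the only change is the sign of the linear term $s(a-b+m)$ in each exponent, producing the sign flip $-(a'-a+2s)(a-b+m)$ in the final formula (marked \red{$-$} in the statement). Finally, the vanishing when $b-a \neq b'-a'$ follows from \eqref{eq:ad1}: the element $\EE{a}\FF{b}\one_m$ lies in $\one_{m+2(b-a)}\,\Udot\,\one_m$ while $\EE{a'}\FF{b'}\one_m$ lies in $\one_{m+2(b'-a')}\,\Udot\,\one_m$, so they pair to zero unless $m+2(b-a) = m+2(b'-a')$; the same argument applies to the $\FF{b}\EE{a}\one_m$ pairings via their weight on the left being $m + 2(b-a)$ as well. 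I do not anticipate a genuine obstacle here — the content is a bookkeeping exercise in matching exponents and Pochhammer symbols — so the main care needed is just in correctly tracking the index substitution $t = a'-a+s$ through the $q$-power exponent and in pinning down the summation bounds so that both PBW-expansions have their indices in range.
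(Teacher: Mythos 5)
Your proposal is correct and is exactly the paper's (one-line) proof carried out in detail: substitute the PBW-expansions of Theorem~\ref{thm:PBW-CB} into the bilinear form and apply the orthogonality \eqref{norm}, matching indices via $t=a'-a+s$. Two harmless slips to note: the lower summation limit actually forced by the derivation is $\max(0,a-a')$, not $\min(0,a-a')$ (your claimed equivalence of the two is not an equivalence; the stated limit just adds terms that vanish under the convention $1/(q^{-2};q^{-2})_n=0$ for $n<0$), and the left weight of $\EE{a}\FF{b}\one_m$ is $m+2(a-b)$ rather than $m+2(b-a)$ --- neither affects the argument.
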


\begin{proof}
This follows immediately from Theorem~\ref{thm:PBW-CB} and Proposition~\ref{prop:PBW1}. 
\end{proof}

\begin{rem}
The formulas in Corollary~\ref{cor:BF1} appeared first in \cite[Proposition 2.8]{La10} (whose convention differs from us by $q\leftrightarrow q^{-1}$), and their direct combinatorial proof is long and occupied \cite[\S10]{La10}. There is a very different formula for the bilinear pairing obtained by adjunctions, cf. \cite{La10}. 
\end{rem}

\begin{rem}
\label{rem:BFrank1}
An alternative short proof of Corollary~\ref{cor:BF1} is as follows (we focus on the case when $m \le b-a$ below). Use the formula \eqref{L25.3a} to compute the bilinear pairing 
\[
\Big(\EE{a} \FF{b} (\xi_{-p} \otimes \eta_{p+m}), \EE{a'} \FF{b'} (\xi_{-p} \otimes \eta_{p+m}) \Big)
\]
on the module ${}^\omega L(p) \otimes L(m+p)$ (which is easy as the summands in \eqref{L25.3a} are orthogonal). Then its limit in $\Q((q^{-1}))$ as $p$ tends to $\infty$ can be directly read off from the reformulation \eqref{eq:EFpq} of \eqref{L25.3a}.
This limit gives us $\big(\EE{a} \FF{b} \one_{m}, \EE{a'} \FF{b'} \one_{m} \big)$ according to  \cite[26.2.3]{Lu93}. 
%
\end{rem}

\begin{rem}
The bilinear pairing on the modified $\imath$quantum group of rank one was computed \cite{Wa21} in the same approach as in Remark~\ref{rem:BFrank1}. For $\imath$quantum groups, this is the only way of computing for now as there is no characterization of the bilinear form via adjunction like \eqref{eq:ad1}--\eqref{eq:ad3}.
\end{rem}

\subsection{PBW basis via canonical basis in rank 1}
 \label{sec:rank1c}

Now we give a formula for the PBW basis in terms of canonical basis. 
\begin{thm}
For $a, b\in \N$ and $m\in \Z$,  we have 
\begin{align}
  \label{wEF}
\mf w_m(a,b) =  
 \begin{cases}
 \sum\limits_{s=0}^{\min(a,b)} (-1)^s \frac{q^{-s +s(a-b +m)}}{(q^{-2}; q^{-2})_s} \EE{a-s} \FF{b-s} \one_{m},
 & \quad \text{if } m \le b-a,
 \\ \\
 \sum\limits_{s=0}^{\min(a,b)} (-1)^s \frac{q^{-s \red{-}s(a-b +m)}}{(q^{-2}; q^{-2})_s} \FF{b-s} \EE{a-s} \one_{m},
 & \quad \text{if } m \ge b-a.
   \end{cases}
\end{align}
\end{thm}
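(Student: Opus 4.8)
The plan is to invert the triangular system established in Theorem~\ref{thm:PBW-CB}. Recall that \eqref{CB-W1} expresses each canonical basis element $\EE{a}\FF{b}\one_m$ (for $m\le b-a$) as a sum over $s$ of scalar multiples of $\mf w_m(a-s,b-s)$, with leading term $\mf w_m(a,b)$. Since the pairs $(a-s,b-s)$ all have the same difference $(a-s)-(b-s)=a-b$, this is a finite unitriangular system indexed by $\min(a,b)$, so it can be inverted explicitly. I would set up the ansatz $\mf w_m(a,b)=\sum_{t\ge 0} c_t(a,b,m)\,\EE{a-t}\FF{b-t}\one_m$ and substitute \eqref{CB-W1} into this; the resulting condition is a convolution identity in the variable $s$ that must collapse to $\delta_{s,0}$. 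Concretely, one needs
\begin{align*}
\sum_{t=0}^{s} c_t\,\frac{q^{-(s-t)^2+(s-t)((a-t)-(b-t)+m)}}{(q^{-2};q^{-2})_{s-t}}=\delta_{s,0},
\end{align*}
and the claim is that $c_t=(-1)^t q^{-t+t(a-b+m)}/(q^{-2};q^{-2})_t$ solves it.

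The heart of the matter is therefore a single $q$-series identity: after factoring out the common power of $q$ and noting $(a-t)-(b-t)=a-b$, the requirement reduces to showing
\begin{align*}
\sum_{t=0}^{s}\frac{(-1)^t\,q^{-t+t^2}\,q^{-(s-t)^2}\,q^{2t(s-t)}}{(q^{-2};q^{-2})_t\,(q^{-2};q^{-2})_{s-t}}
=\sum_{t=0}^{s}\frac{(-1)^t q^{t(t-1)}}{(q^{-2};q^{-2})_t\,(q^{-2};q^{-2})_{s-t}}\cdot q^{-s^2+2st-t^2+t^2-t+\cdots}
\end{align*}
vanishes for $s\ge 1$. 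I would clean this up by writing everything with $Q=q^{-2}$: the sum becomes (up to an overall monomial) $\sum_{t=0}^s (-1)^t Q^{\binom{t}{2}}\qbinom{s}{t}_Q$ after recognizing $(Q;Q)_s/((Q;Q)_t(Q;Q)_{s-t})$ as a Gaussian binomial coefficient and clearing the denominator $(Q;Q)_s$. This is precisely the classical finite $q$-binomial theorem evaluation
\begin{align*}
\sum_{t=0}^{s}(-1)^t Q^{\binom{t}{2}}\qbinom{s}{t}_Q = (1;Q)_{s-1}\cdot(\text{something})=\delta_{s,0},
\end{align*}
more precisely $\sum_{t=0}^s (-1)^t Q^{\binom t2}\qbinom st_Q x^t = (x;Q)_s$, evaluated at $x=1$, which is $0$ for $s\ge 1$. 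So the combinatorial core is a one-line appeal to a standard $q$-identity; the real work is bookkeeping the $q$-exponents correctly so that the cross terms $q^{2t(s-t)}$ combine with $q^{t^2}$ and $q^{-(s-t)^2}$ to produce exactly $q^{-s^2}$ times $Q^{\binom t2}$ times the rest.

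The second case $m\ge b-a$ is handled identically, inverting \eqref{CB-W2} instead; the only change is the sign flip in the exponent $\red{-}s(a-b+m)$, which propagates verbatim through the inversion and yields the second branch of \eqref{wEF}. I should also check consistency on the overlap $m=b-a$: there \eqref{EF=FE} identifies $\EE{a}\FF{b}\one_m=\FF{b}\EE{a}\one_m$, and correspondingly the two formulas for $\mf w_m(a,b)$ in \eqref{wEF} must agree — but when $m=b-a$ we have $a-b+m=0$, so both branches reduce to $\sum_s(-1)^s q^{-s}/(q^{-2};q^{-2})_s$ times the (common) canonical basis element $\EE{a-s}\FF{b-s}\one_m=\FF{b-s}\EE{a-s}\one_m$ (the latter identification valid since $(b-s)-(a-s)=b-a=m$), so they coincide. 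The main obstacle I anticipate is purely computational: getting the $q$-power exponent in the ansatz right on the first try, since the exponent $-s^2+s(a-b+m)$ in \eqref{CB-W1} and the target exponent $-s+s(a-b+m)$ in \eqref{wEF} differ in the $s^2$ versus $s$ term, and this discrepancy is exactly what the Gaussian-binomial alternating sum absorbs — so any sign or exponent slip will make the telescoping fail and is easy to commit.
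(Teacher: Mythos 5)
Your proposal matches the paper's proof: the paper likewise rephrases \eqref{CB-W1} as a unitriangular transition matrix, posits the inverse matrix given by \eqref{wEF}, and reduces the resulting convolution identity (after clearing the common $q$-power and the denominator $(q^{-2};q^{-2})_k$) to the standard alternating $q$-binomial identity of \cite[1.3.4]{Lu93}. One small correction to your bookkeeping: with $Q=q^{-2}$ the cleaned-up sum is $\sum_{t}(-1)^t Q^{\binom{t}{2}}\qbinom{s}{t}\,x^t=(x;Q)_s$ evaluated at $x=Q^{1-s}$ rather than at $x=1$, but $(Q^{1-s};Q)_s$ still contains the factor $1-Q^{1-s}Q^{s-1}=0$, so the conclusion for $s\ge 1$ stands and the rest of your argument (including the consistency check at $m=b-a$) goes through as you describe.
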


\begin{proof}
  We shall prove the first formula for $\mf w_m(a,b)$ with $m \le b-a$; the proof of the other formula is entirely similar and will be skipped.
  
Let us fix $a,b \in \N$ and $m\in \Z$ such that $m \le b-a$.  One could rephrase the formula \eqref{CB-W1} as giving the unital triangular transition matrix from a basis $C :=\{\EE{a-s} \FF{b-s} \one_{m} \mid 0\le s\le \min(a,b) \}$ to another basis $P :=\{\mf w_m(a-s,b-s) \mid 0\le s\le \min(a,b)\}$ (these 2 sets have the same span). Accordingly,  the first formula in \eqref{wEF} gives the unital triangular transition matrix from $P$ to $C$, and we shall prove these two  transition matrices are inverses to each other. This boils down to verifying the following identity, for $k \ge 1$:
\[
\sum_{s=0}^k (-1)^{k-s} \frac{q^{-s^2+s(a-b+k)} q^{(k-s)(a-b+k-1)} }{(q^{-2}; q^{-2})_s (q^{-2}; q^{-2})_{k-s}} =0. 
\]
Upon multiplying with $(-1)^k q^{k(b-a-k+1)} (q^{-2}; q^{-2})_{k}$, we reduce the above identity to the following standard $q$-binomial identity, for $k\ge 1$  (cf. \cite[1.3.4]{Lu93}):
\[
\sum_{s=0}^k (-1)^{s} q^{s(1-m)} \qbinom{m}{s} =0.
\]
This proves the first formula in \eqref{wEF} (actually, we have shown that \eqref{CB-W1} and the first formula in \eqref{wEF} are equivalent). 
\end{proof}

The two different formulas of $\mf w_m(a,b)$ for $m=b-a$ in \eqref{wEF} coincide thanks to \eqref{EF=FE}. The simplest new PBW basis elements are given by 
\begin{align}
\label{eq:w11}
\mf w_m(1,1) =  
 \begin{cases}
EF \one_{m} - \frac{q^{m-1}}{1-q^{-2}} \one_{m} & \quad \text{if } m \le 0,
 \\ \\
FE \one_{m} - \frac{q^{-m-1} }{1-q^{-2}} \one_{m},
 & \quad \text{if } m \ge 0.
   \end{cases}
\end{align}
For $m\le 0$, we have
$\mf w_m(1,1) (\xi_{-p} \otimes \eta_{p+m})
= E\xi_{-p} \otimes F \eta_{p+m} -\frac{q^{-1-2p}}{1-q^{-2}} (\xi_{-p} \otimes \eta_{p+m}).$


\end{document}